\newtheorem{theorem}{Theorem}[section]
\newtheorem{lemma}[theorem]{Lemma}
\newtheorem{proposition}[theorem]{Proposition}
\theoremstyle{definition}
\newtheorem{definition}[theorem]{Definition}
\newtheorem{remark}[theorem]{Remark}
\numberwithin{equation}{section}
\numberwithin{equation}{section}
\begin{document}

%-------------------------------------------------------------------------
% editorial commands: to be inserted by the editorial office
%
%\firstpage{1} \volume{228} \Copyrightyear{2004} \DOI{003-0001}
%
%
%\seriesextra{Just an add-on}
%\seriesextraline{This is the Concrete Title of this Book\br H.E. R and S.T.C. W, Eds.}
%
% for journals:
%
%\firstpage{1}
%\issuenumber{1}
%\Volumeandyear{1 (2004)}
%\Copyrightyear{2004}
%\DOI{003-xxxx-y}
%\Signet
%\commby{inhouse}
%\submitted{March 14, 2003}
%\received{March 16, 2000}
%\revised{June 1, 2000}
%\accepted{July 22, 2000}
%
%
%
%---------------------------------------------------------------------------
%Insert here the title, affiliations and abstract:
%

\title{Some approximation properties in fractional Musielak-Sobolev spaces}

%----------Author 1
\author{Azeddine BAALAL}
\address{{ Department of Mathematics and Computer Science},
	{A\"in Chock Faculty,  Hassan II University}, 
	{ B.P. 5366 Maarif, Casablanca},
	{Morocco}}
\email{abaalal@gmail.com}

%----------Author 2
\author{Mohamed BERGHOUT}
 	\address{{Laboratory of Partial Differential Equations, Algebra and Spectral Geometry, Higher School of Education and Training, Ibn Tofail University}, {P.O. Box 242-Kenitra 14000, Kenitra}, {Morocco}}
\email{Mohamed.berghout@uit.ac.ma; moh.berghout@gmail.com}
 
 %----------Author 3
 \author{EL-Houcine OUALI* \orcidlink{0009-0007-9106-6441}}
\address{{ Department of Mathematics and Computer Science},
	{A\"in Chock Faculty,  Hassan II University}, 
	{ B.P. 5366 Maarif, Casablanca},
	{Morocco}}
\email{oualihoucine4@gmail.com}

%----------classification, keywords, date
\subjclass{46E35,\; 46E30.}

\keywords{Fractional Musielak-Sobolev spaces,\; Modular spaces,\; Density properties.}
\date{}
%----------additions
%\dedicatory{To my boss}
%%% ----------------------------------------------------------------------

\begin{abstract}
	  In this article we show some density properties of smooth and compactly supported functions in fractional Musielak-Sobolev spaces essentially extending the results of Fiscella, Servadei and Valdinoci in \cite{fiscella2015density} obtained in the fractional Sobolev setting. The proofs of this properties are mainly based on a basic technique of convolution (which makes functions $C^{\infty}$), joined with a cut-off (which makes their support compact), with some care needed in order not to exceed the original support.
\end{abstract}

%%% ----------------------------------------------------------------------
\maketitle
%%% ----------------------------------------------------------------------

\section{Introduction and main results}
Recently, great attention has been focused on problems involving the theory of fractional modular spaces, in particular the fractional Sobolev spaces with variable exponents $W^{s,q(.),p(.,.)}(\Omega)$ (see \cite{baalal2018traces,babm2018density,barv2018onanew,dlrj2017traces,kjv2017fract,km2023bourgan}) and the fractional order Orlicz- Sobolev spaces $W^{s,G}(\Omega)$ (see \cite{bsoh2020embedding,bot2020basic,bor2023anewclass,BjSa2019}), which are two distinct extensions of the so-called fractional Sobolev spaces $ W^{s,p}(\Omega)$ (see \cite{bms2019radial,di2012hitchhikers}),  and they are two special kinds of fractional Musielak-Sobolev spaces $W^{s,G_{x,y}}(\Omega)$ (see \cite{abss2022class, abss2023emb,bamhoh2024onthefract,aacs2023onfrac}). Particularly,
one of these problems is the density of smooth and compactly supported functions in these spaces.

Our paper is motivated by the article \cite{fiscella2015density}, where the authors consider the Sobolev space $X^{s,p}_{0}(\Omega)$ of functions $f$ with the finite norm
$$
\|f\|_{L^p\left(\mathbb{R}^N\right)}+\left(\int_{\mathbb{R}^N \times \mathbb{R}^N}|f(x)-f(y)|^p K(x-y) d x d y\right)^{1 / p},
$$
but vanishing outside $\Omega$, with some assumptions on the kernel $K$. The authors proved that the space $C_0^{\infty}(\Omega)$ defined by
\begin{equation}\label{Smooth}
C_0^{\infty}(\Omega)=\lbrace g: \mathbb{R}^N \rightarrow \mathbb{R}: g \in C^{\infty}\left(\mathbb{R}^N\right), \textit{ Supp } g \textsl{  is compact and } \textit{Supp } g \subseteq \Omega\rbrace,
\end{equation}
where Supp $g=\overline{\left\{x \in \mathbb{R}^N: g(x) \neq 0\right\}}$, is dense in $X_0^{s, p}(\Omega)$ when $\Omega$ is either a hypograph or a domain with continuous boundary (see \cite{fiscella2015density}, Theorem 2 and Theorem 6 ).

Let us also mention other articles on similar topics. In \cite{baalal2024density} the authors extended the results obtained in \cite{fiscella2015density} to the fractional Orlicz–Sobolev framework, proving that the space $C_0^{\infty}(\Omega)$ is dense in $W_0^{s, G}(\Omega)$ when $\Omega$ is either a hypograph or a domain with continuous boundary, where the space $W_0^{s, G}(\Omega)$ is defined by 
$$W_0^{s, G}(\Omega):=\left\lbrace u\in W^{s, G}(\mathbb{R}^{N}): u=0 \textsf{ a.e. in  } \mathbb{R}^{N}\setminus\Omega \right\rbrace. $$

In \cite{babm2018density}, the authors considered fractional Sobolev spaces with variable exponents $W^{s, q(\cdot), p(\cdot,\cdot)}(\Omega)$ and proved that under certain conditions for the functions $p$ and $q$, the space of smooth and compactly supported functions is dense in $W^{s, q(\cdot), p(\cdot,\cdot)}(\Omega)$.

Our main goal in this paper is to extend the density results obtained in \cite{fiscella2015density} to include the space $W^{s,G_{x,y}}_{0}(\Omega)$ of functions $u\in W^{s,G_{x,y}}(\Omega)$ that vanish a.e outside $\Omega$. Namely \begin{equation}\label{space}
 W^{s,G_{x,y}}_{0}(\Omega):=\left\lbrace u\in W^{s, G_{x,y}}(\mathbb{R}^{N}): u=0 \textsf{ a.e. in  } \mathbb{R}^{N}\setminus\Omega \right\rbrace.
\end{equation} It turns out that for some open subset $\Omega$ of $\mathbb{R}^{N}$ satisfying
certain conditions, smooth and compactly supported functions are dense in $W^{s,G_{x,y}}_{0}(\Omega)$, see Theorem \ref{theo1} and Theorem \ref{theo2}. Our strategy of the proof follows the approach of \cite[Theorem 2 and Theorem 6]{fiscella2015density}, in which we use a basic technique of convolution joined with a cut-off, with some care needed in order not to exceed the original support.

To set our main results, we consider the following definitions and assumptions. Let $N\geqslant 1$, $\Omega$ an open subset in $ \mathbb{R}^{N}$ and  $G: \Omega \times \Omega \times[0, +\infty) \rightarrow$ $[0, +\infty)$ a Carathéodory function defined by
\begin{equation}\label{eq1}
G_{x,y}(t):=G(x, y, t)=\int_0^t g(x, y, \tau) d \tau,
\end{equation}
where
$$g(x, y, t):= \begin{cases}a(x, y, t) t & \text { if } t \neq 0 \\ 0 & \text { if } t=0,\end{cases}$$
with $a: \Omega \times \Omega \times(0, +\infty) \rightarrow[0, +\infty)$ is a function satisfying :
\begin{itemize}
\item[$({g_{1}})$] $\displaystyle\lim _{t \rightarrow 0} a(x, y, t)t=0$ and $\displaystyle\lim _{t \rightarrow +\infty} a(x, y, t)t=+\infty$ for all $(x, y) \in \Omega \times \Omega$;
\item[$(g_{2})$] $t\mapsto a_{x,y}(t):=a(x, y, t)$ is continuous on $(0, +\infty)$ for all $(x, y) \in \Omega \times \Omega$;
\item[$(g_{3})$] $t\mapsto a_{x,y}(t)t$ is increasing on $(0, +\infty)$ for all $(x, y) \in \Omega \times \Omega$.
\item[$(g_{4})$] There exist positive constants $g^{+}$and $g^{-}$ such that
\begin{equation}\label{eq1.5}
1<g^{-} \leqslant \frac{ a_{x, y}(t)t^{2}}{G_{x, y}(t)} \leqslant g^{+}<+\infty \quad \textsl{ for all } (x, y) \in \Omega \times \Omega, \quad  \textsl{ and all  } t > 0 .
\end{equation}
\end{itemize}
Now let us consider the function $\widehat{G}_x: \Omega \times \mathbb{R}^{+}\rightarrow \mathbb{R}^{+} $ given by
\begin{equation}\label{eq2}
\widehat{G}_x(t):=\widehat{G}(x, t):=G(x,x,t)=\int_0^t \widehat{g}(x,\tau) \mathrm{d} \tau .
\end{equation}
where $\widehat{g}(x,t):=\widehat{a}(x,t)t=a(x, x,t)t$ for all $(x, t) \in \Omega \times (0,+\infty)$.
 The assumption $(g_{4})$ implies that
\begin{equation}\label{eq1.6}
1<g^{-} \leqslant \frac{ \widehat{g}(x,t)t}{\widehat{G}(x,t)} \leqslant g^{+}<+\infty, \quad \textsl{ for all } x \in \Omega,\quad \textsl{ and all } t > 0 .
\end{equation}\\
As a consequence of the previous assumptions, one may obtain the following properties:
\begin{itemize}
\item[(i)] $t\rightarrow G_{x,y}(t)$ is continuous,  strictly increasing and convex on $[0,+\infty)$;
\item[(ii)] $\displaystyle\lim _{t \rightarrow 0} \frac{G_{x,y}(t)}{t}=0$;
\item[(iii)]
$
\displaystyle\lim _{t \rightarrow +\infty} \frac{G_{x,y} (t)}{t}=+\infty$;
\item[(iv)] $G_{x,y}(t)>0$ for all $t>0$.
\end{itemize}
See for instance the book of Kufner–John-Fu\v{c}\'{\i}k \cite[Lemma 3.2.2]{kajofs1977}.
\begin{definition}
Let $\Omega$ be an open subset of $\mathbb{R}^{N}$. A function $G: \Omega \times \Omega \times \mathbb{R^{+}} \rightarrow \mathbb{R^{+}}$ is said to be a generalized N-function if it fulfills the properties (i)$-$(iv) above for a.e. $(x, y) \in \Omega \times \Omega$, and for each $t\geq 0$, $G_{x,y}(t)$ is measurable in $(x, y)$.
\end{definition}

In light of assumption $(g_{4})$, the functions $G_{x, y}$ and $\widehat{G}_x$ satisfy the $\Delta_2$-condition (see \cite[Proposition 2.3]{mmrv2008neu}), written $G_{x, y} \in \Delta_2$ and $\widehat{G}_x \in \Delta_2$, that is there exists a positive constant $K$ such that
\begin{equation}
G_{x, y}(2 t) \leqslant K G_{x, y}(t) \quad \textsl{ for all  }\quad (x, y) \in \Omega \times \Omega \quad \textsl{ and  }\quad t > 0 \text {, }
\end{equation}
and
\begin{equation}\label{1.6eq2}
\widehat{G}_x(2 t) \leqslant K \widehat{G}_x(t) \quad \textsl{ for all  }\quad x \in \Omega \quad \textsl{ and  }\quad t > 0.
\end{equation}

For technical reasons, let us assume, throughout this paper, that :
\begin{itemize}

\item[$(H_{1})$] $G_{x,y}$ and $\widehat{G}_{x}$ are locally integrables, that is for any constant number $c > 0$ and for every compact set $A \subset \Omega$ we have : 

\begin{equation}\label{eq1.3}
\int_{A \times A}G_{x,y}(c)dxdy < +\infty \textsl{ and } \int_{A}\widehat{G}_{x}(c)dx<+\infty,
\end{equation}
\item[$(H_{2})$] 
\begin{equation}\label{eq1.4}
G(x-z,y-z,t)=G(x,y,t) \quad\quad \forall (x,y),(z,z)\in \Omega \times \Omega \textit{, } \forall t \geqslant 0.
\end{equation}

\end{itemize}

\begin{definition}
We say that a generalized N-function $G_{x,y}$ satisfies the fractional boundedness condition, writen $G_{x,y}\in \mathcal{B}_{f} $, if there exist $C_{1}, C_{2} > 0$ such that
\begin{equation}
 C_{1}\leqslant G_{x,y}(1)\leqslant C_{2} \quad \forall (x,y)\in \Omega\times \Omega.
 \end{equation}
\end{definition}

We have the following examples of generalized N-functions satisfying the previous assumptions, and thus are admissible in our results on density of smooth functions.
\begin{itemize}
\item[(1)] Let $G_{x, y}(t)=t^{p(x, y)}$, for all $(x, y) \in \Omega \times \Omega$ and all $t \geqslant 0$, where $p: \Omega \times \Omega \longrightarrow(1,+\infty)$ is a continuous function satisfying
$$
1<p^{-} \leq p(x, y) \leq p^{+}<+\infty, \text {for all }(x, y) \in \Omega \times \Omega
$$
and
$$p\left( (x, y)-(z,z)\right)=p(x, y),\quad \text {for all }\quad(x, y),(z,z) \in \Omega \times \Omega.  $$
In this case the function $G_{x,y}$ satisfies the assumptions $(g_{1})-(g_{4})$, $(H_{1})$ and $(H_{2})$ and $G_{x,y}\in \mathcal{B}_{f}$.
\item[(2)] Let $G_{x, y}(t)=M(t)$, for all $(x, y) \in \Omega \times \Omega$ and all $t \geqslant 0$, where $\displaystyle M(t):=\int_0^t m(\tau) d \tau$ is an N-function (for definition see \cite{Adams1975}]) satisfying the following condition
$$1<m^{-} \leq \frac{m(t) t}{M(t)} \leq m^{+}<+\infty \quad \textsl{ for all }\quad t>0.$$
It is clear that the generalized N-function $G_{x, y}$ satisfies the assumptions $(g_{1})-(g_{4})$, $(H_{1})$ and $(H_{2})$ and $G_{x,y}\in \mathcal{B}_{f}$.
\end{itemize}
\begin{definition}
Let $G_{x,y}$ be a generalized N-function. The function $\widetilde{G}: \Omega \times \Omega \times [0,+\infty) \rightarrow[0,+\infty)$ defined by
\begin{equation}\label{Gconjugate}
\widetilde{G}_{x, y}(t)=\widetilde{G}(x, y, t):=\sup _{s \geq 0}\left(t s-G_{x, y}(s)\right) \quad  \forall  (x, y) \in \Omega \times \Omega, \quad  \forall t \geqslant 0
\end{equation}
is called the conjugate of $G$ in the sense of Young.
\end{definition}
It is not hard to see that $\left(g_1\right)-\left(g_4\right)$ imply that $\widetilde{G}$ is a generalized $N$-function and satisfies the $\Delta_2$-condition. Moreover, in view of \eqref{Gconjugate} we have the following Young's type inequality:
\begin{equation}\label{ineqYong}
\sigma \tau \leq G_{x, y}(\sigma)+\widetilde{G}_{x, y}(\tau), \text { for all }(x, y) \in \Omega \times \Omega \text { and } \sigma, \tau \geq 0.
\end{equation}
\subsection{Main results}

\begin{definition}

The open set $\Omega \subseteq \mathbb{R}^N$ is a hypograph if there exists a continuous function $\xi: \mathbb{R}^{N-1} \rightarrow \mathbb{R}$ such that, up to a rigid motion,
$$
\Omega:=\left\{\left(x^{\prime}, x_N\right) \in \mathbb{R}^{N-1} \times \mathbb{R}: x_N<\xi\left(x^{\prime}\right)\right\} .
$$
\end{definition}

 The first main result is the following Theorem.
\begin{theorem}\label{theo1}
 Let $\Omega \subseteq \mathbb{R}^N$ be a hypograph. Assume that $(g_{1})-(g_{4})$ and $(H_{1})-(H_{2})$ hold and $G_{x,y}\in \mathcal{B}_{f}$. Then, the space $C_0^{\infty}(\Omega)$ is dense in $W^{s,G_{x,y}}_{0}(\Omega)$.
\end{theorem}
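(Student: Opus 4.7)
The plan is to follow the three-step construction of \cite{fiscella2015density}, adapted to the Musielak setting: given $u \in W^{s,G_{x,y}}_0(\Omega)$, first translate $u$ vertically to push its support strictly inside $\Omega$, then regularise by convolution with a standard mollifier, and finally truncate with a smooth compactly supported cutoff; a diagonal extraction then produces a sequence in $C_0^{\infty}(\Omega)$ converging to $u$. Concretely, for the translation step I set $u_\tau(x',x_N) := u(x', x_N+\tau)$ with $\tau>0$. The hypograph condition together with $u \equiv 0$ a.e. on $\mathbb{R}^N\setminus\Omega$ forces $u_\tau \equiv 0$ on $\{x_N \geq \xi(x')-\tau\}$, and by continuity of $\xi$ this set stays at positive distance (on every compact) from $\mathbb{R}^N\setminus\Omega$. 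A change of variables combined with $(H_2)$ shows that both the $\widehat{G}$-modular and the Gagliardo-type $G_{x,y}$-modular of $u_\tau$ coincide with those of $u$, so $u_\tau \in W^{s,G_{x,y}}_0(\Omega)$. Convergence $u_\tau \to u$ in $W^{s,G_{x,y}}$ as $\tau\to 0^+$ follows from continuity of translations in Musielak--Orlicz spaces (valid because $\widehat{G}_x, G_{x,y}\in\Delta_2$) on the $L^{\widehat{G}}$-part, and from Lebesgue dominated convergence on the double integral, with the modular density of $u$ supplying an integrable majorant via convexity and $\Delta_2$.

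For the mollification step I set $u_{\tau,\varepsilon} := u_\tau * \rho_\varepsilon$ with a standard $C_c^{\infty}$ mollifier $\rho_\varepsilon$ supported in $B_\varepsilon$; uniform continuity of $\xi$ on every ball $B_R$ produces an $\varepsilon_0(\tau, R)$ below which $\mathrm{Supp}(u_{\tau,\varepsilon})\cap B_R \subset \Omega$. The convergence $u_{\tau,\varepsilon}\to u_\tau$ in $W^{s,G_{x,y}}$ comes from writing $u_{\tau,\varepsilon}(x)-u_\tau(x) = \int \rho_\varepsilon(z)\bigl(u_\tau(x-z)-u_\tau(x)\bigr)\,dz$, applying Jensen's inequality inside the convex $G_{x,y}$, using $(H_2)$ to absorb the $z$-shift under the modular, and invoking dominated convergence. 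For the cut-off step, let $\eta_R \in C_c^{\infty}(\mathbb{R}^N)$ be a standard cutoff with $\eta_R \equiv 1$ on $B_R$, $\mathrm{Supp}\,\eta_R \subset B_{2R}$ and $|\nabla \eta_R|\leq C/R$; then $v_{\tau,\varepsilon,R} := \eta_R u_{\tau,\varepsilon} \in C_0^{\infty}(\Omega)$ whenever $\varepsilon < \varepsilon_0(\tau, 2R)$. Splitting $\mathbb{R}^{2N}$ into $\{|x|,|y|<R\}$, the mixed region $\{|x|<R\leq|y|\}\cup\{|y|<R\leq|x|\}$ and $\{|x|,|y|\geq R\}$, using the decomposition
\[
(\eta_R u_{\tau,\varepsilon})(x)-(\eta_R u_{\tau,\varepsilon})(y) = \eta_R(x)\bigl(u_{\tau,\varepsilon}(x)-u_{\tau,\varepsilon}(y)\bigr) + \bigl(\eta_R(x)-\eta_R(y)\bigr)u_{\tau,\varepsilon}(y),
\]
and exploiting convexity of $G_{x,y}$, the $\Delta_2$-condition and $\mathcal{B}_f$ to produce $R$-uniform integrable majorants, a final dominated convergence argument yields $v_{\tau,\varepsilon,R}\to u_{\tau,\varepsilon}$ in $W^{s,G_{x,y}}$ as $R\to\infty$. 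Sending $R\to\infty$, then $\varepsilon\to 0$, then $\tau\to 0^+$, a diagonal extraction closes the argument.

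The main obstacle is the cut-off step: truncation breaks translation invariance, so $(H_2)$ can no longer be used to absorb the $(x,y)$-dependence of $G$, and it is precisely the fractional boundedness $G_{x,y}\in\mathcal{B}_f$ that allows one to convert values of $G_{x,y}$ at bounded arguments into $L^1$-controlled quantities uniformly in $(x,y)$. Producing an $R$-uniform integrable majorant on the mixed region $\{|x|<R\leq|y|\}$, where both the jump of $\eta_R$ and the tail behaviour of $u_{\tau,\varepsilon}$ contribute simultaneously, is the delicate balance that $\mathcal{B}_f$ is designed to resolve; the other two steps are comparatively routine once the $\Delta_2$-condition and $(H_2)$ are in place.
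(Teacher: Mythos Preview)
Your three ingredients (vertical translation, mollification, cut-off) are exactly those of the paper, but the paper applies them in a different order: translate, then \emph{cut off}, then \emph{mollify}. Concretely the paper's approximant is $\rho_\varepsilon=(\tau_{\bar\jmath}\,u_{\bar\delta})*J_\varepsilon$, not your $v_{\tau,\varepsilon,R}=\eta_R(u_\tau*\rho_\varepsilon)$. This swap is not cosmetic. With the paper's order, once $\bar\delta$ and $\bar\jmath$ are fixed, the function $\tau_{\bar\jmath}u_{\bar\delta}$ already has compact support in $\overline{B}_{\bar\jmath+1}$; Lemma~\ref{lemmad} then furnishes a single threshold $a=a(N,u,\bar\delta,\bar\jmath+2,\Omega)>0$, and any $\varepsilon<a/2$ forces $\rho_\varepsilon\in C_0^\infty(\Omega)$. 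The mollification parameter is constrained only \emph{after} the cut-off radius has been frozen, so the three $\sigma/3$ estimates and the membership in $C_0^\infty(\Omega)$ decouple completely.

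In your order the corresponding constraint is $\varepsilon<\varepsilon_0(\tau,2R)$, and since $\xi$ is assumed merely continuous (not uniformly continuous on $\mathbb{R}^{N-1}$), one may have $\varepsilon_0(\tau,2R)\to 0$ as $R\to\infty$. Your stated limit order is ``$R\to\infty$, then $\varepsilon\to0$, then $\tau\to0^+$'': in terms of choices this means fixing $\tau$, then $\varepsilon$, and only afterwards selecting $R$ large from the cut-off convergence $\|\eta_R u_{\tau,\varepsilon}-u_{\tau,\varepsilon}\|\to0$. For the $R$ you end up needing there is no reason why $\varepsilon<\varepsilon_0(\tau,2R)$ should still hold, so $v_{\tau,\varepsilon,R}\in C_0^\infty(\Omega)$ is not guaranteed. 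A generic ``diagonal extraction'' does not repair this, because your three convergence statements are established for nested limits whose order is incompatible with the constraint. The clean fix is precisely the paper's swap: cut off first (this freezes the relevant radius), then mollify with $\varepsilon$ small relative to that fixed radius. With that change your sketch becomes the paper's proof verbatim, relying on Lemmas~\ref{lem2.066}, \ref{lem2.8} and \ref{lem2.06} in that order.
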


\begin{definition}
The open set $\Omega \subseteq \mathbb{R}^N$ is a domain with continuous boundary $\partial \Omega$ if the following conditions are satisfied:
\begin{itemize}
\item[(a)]$\partial \Omega$ is compact
\item[(b)] there exist $M \in \mathbb{N}$, open sets $W_1, \ldots, W_M \subseteq \mathbb{R}^N$, sets $\Omega_1, \ldots, \Omega_M \subseteq \mathbb{R}^N$, continuous functions $\xi_1, \ldots, \xi_M: \mathbb{R}^{N-1} \rightarrow \mathbb{R}$ and rigid motions $\mathscr{R}_1, \ldots, \mathscr{R}_M$ : $\mathbb{R}^N \rightarrow \mathbb{R}^N$ satisfying the following conditions:
\begin{itemize}
\item[$(b_{1})$] $\partial \Omega \subseteq \bigcup_{j=1}^M W_j,$
\item[$(b_{2})$]$\mathscr{R}_j\left(\Omega_j\right):=\left\{\left(x^{\prime}, x_N\right) \in \mathbb{R}^{N-1} \times \mathbb{R}: x_N<\xi_j\left(x^{\prime}\right)\right\}, \text { for any } j \in\{1, \ldots, M\},$
\item[$(b_{3})$] $W_j \cap \Omega=W_j \cap \Omega_j .$
\end{itemize}
\end{itemize}
\end{definition}

The second main result is the following Theorem.
\begin{theorem}\label{theo2}
Let $\Omega$ be an open subset of $\mathbb{R}^N$ with continuous boundary. Assume that $(g_{1})-(g_{4})$ and $(H_{1})-(H_{2})$  hold and $G_{x,y}\in \mathcal{B}_{f}$. Then, the space $C_0^{\infty}(\Omega)$ is  dense in $W^{s,G_{x,y}}_{0}(\Omega)$.
\end{theorem}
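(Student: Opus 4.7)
The plan is to reduce the continuous-boundary case to the hypograph case of Theorem \ref{theo1} via a partition-of-unity argument localized around the (compact) boundary $\partial\Omega$. Given $u\in W_0^{s,G_{x,y}}(\Omega)$, I would adjoin to the given cover $\{W_1,\ldots,W_M\}$ of $\partial\Omega$ an additional open set $W_0$ with $\overline{W_0}\Subset\Omega$, chosen so that $\{W_0,W_1,\ldots,W_M\}$ covers $\overline{\Omega}$; if $\Omega$ is unbounded a preliminary truncation $u\eta_R$ by a smooth radial cutoff reduces to the case of bounded support, the convergence $u\eta_R\to u$ in $W^{s,G_{x,y}}(\mathbb{R}^N)$ following from $(H_1)$, the $\Delta_2$ condition and a modular dominated convergence argument. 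Picking a smooth partition of unity $\{\phi_j\}_{j=0}^M$ subordinate to this cover and writing $u=\sum_{j=0}^M u\phi_j$ reduces the problem to approximating each $u\phi_j$ in $W^{s,G_{x,y}}(\mathbb{R}^N)$ by functions in $C_0^\infty(\Omega)$.

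The first ingredient is a Leibniz-type bound showing that each $u\phi_j$ lies in $W_0^{s,G_{x,y}}(\Omega)$: one splits the Gagliardo-type double integral defining the seminorm into $\{|x-y|\le 1\}$ and $\{|x-y|>1\}$, using the Lipschitz bound on $\phi_j$, the $\Delta_2$ condition, and convexity of $G_{x,y}$ on the first region, and $\|\phi_j\|_\infty\le 1$ together with $G_{x,y}\in\mathcal{B}_f$ on the second. The interior piece $u\phi_0$ has support in $\overline{W_0}\Subset\Omega$ and is approximated directly by mollification with radius less than $\operatorname{dist}(\overline{W_0},\partial\Omega)$, exactly as in the interior step of Theorem \ref{theo1}. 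For each boundary piece $u\phi_j$ with $j\ge 1$, the support lies in $W_j\cap\Omega=W_j\cap\Omega_j$; since, after the rigid motion $\mathscr{R}_j$, the set $\Omega_j$ is a hypograph, Theorem \ref{theo1} provides $\psi_n^j\in C_0^\infty(\Omega_j)$ with $\psi_n^j\to u\phi_j$ in $W^{s,G_{x,y}}(\mathbb{R}^N)$. A further cutoff $\eta_j\in C_0^\infty(W_j)$ equal to $1$ on a neighborhood of $\operatorname{supp}(u\phi_j)$ converts these into $\eta_j\psi_n^j\in C_0^\infty(W_j\cap\Omega_j)=C_0^\infty(W_j\cap\Omega)\subseteq C_0^\infty(\Omega)$, with $\eta_j\psi_n^j\to \eta_j u\phi_j=u\phi_j$ by the same Leibniz-type estimate. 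Summing over $j$ yields the desired approximation.

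The principal obstacle is the $(x,y)$-dependence of $G_{x,y}$, which obstructs the routine Leibniz and mollification calculations of the constant-exponent and Orlicz-valued settings in \cite{fiscella2015density, baalal2024density}. One must control the modular of $(u\phi_j)(x)-(u\phi_j)(y)$ uniformly in $(x,y)$ using only convexity, $\Delta_2$, $(H_1)$ and the fractional boundedness hypothesis $\mathcal{B}_f$; the latter is the crucial input that allows the far-field contribution $\{|x-y|>1\}$ to be absorbed despite the lack of a uniform growth comparability of $G_{x,y}$ across $(x,y)$. A secondary point is checking that the rigid motions $\mathscr{R}_j$ do not disturb the hypotheses of Theorem \ref{theo1}: the translational component is handled directly by $(H_2)$, and the orthogonal component preserves $(g_1)$--$(g_4)$, $(H_1)$ and $\mathcal{B}_f$ since these are all stated pointwise in $(x,y)$ without reference to the coordinate axes.
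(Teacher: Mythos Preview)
Your strategy is the same as the paper's: both reduce the continuous-boundary case to the hypograph case via a partition of unity subordinate to the cover $\{W_0,W_1,\dots,W_M\}$, exactly following Theorem~6 of \cite{fiscella2015density}. The paper's own proof contains no details beyond that citation, so in substance you have supplied what the paper omits.

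There is one deviation worth flagging. To force the hypograph approximants $\psi_n^j\in C_0^\infty(\Omega_j)$ back into $C_0^\infty(\Omega)$ you multiply by a further cutoff $\eta_j\in C_0^\infty(W_j)$ and invoke ``the same Leibniz-type estimate'' to get $\eta_j\psi_n^j\to u\phi_j$. That estimate, as developed in Lemma~\ref{2lem4} and Lemma~\ref{lem2.8}, shows that multiplication by a fixed smooth cutoff maps $W^{s,G_{x,y}}(\mathbb{R}^N)$ into itself; it does \emph{not} directly give continuity of the multiplication operator, because the cross term $\int\!\!\int G_{x,y}\bigl(|w_n(y)|\,|\eta_j(x)-\eta_j(y)|/|x-y|^s\bigr)\,d\mu$ is not bounded by the $W^{s,G_{x,y}}$-modular of $w_n$ in any obvious way when $G$ genuinely depends on $(x,y)$. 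The route taken in \cite{fiscella2015density} (and implicitly here) avoids this entirely: by Remark~\ref{remark1} the approximants produced by Theorem~\ref{theo1} already have support contained in $\operatorname{Supp}(u\phi_j)+B_\gamma$, hence in $W_j\cap\Omega_j=W_j\cap\Omega$ for $\gamma$ small, so no post-hoc cutoff is needed. Replacing your $\eta_j$-step with this observation closes the argument without extra work.

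Your remark that the rigid motions $\mathscr{R}_j$ must be checked against the hypotheses of Theorem~\ref{theo1} is a genuine point that the paper does not address; your handling of it (push $G$ forward, verify $(g_1)$--$(g_4)$, $(H_1)$, $\mathcal{B}_f$ pointwise, and recover $(H_2)$ for the pushed-forward $G$ from $(H_2)$ for the original one via the diagonal shift $R^Tz$) is correct.
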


This paper is organized as follows.  In Section \ref{sec2}, we state some fundamental properties of the generalized N-functions, Musielak-Orlicz spaces and fractional Musielak-Sobolev spaces. In Section \ref{sec3} we give the proof of the main results.
\section{ Some preliminaries results}\label{sec2}

In this section we give some definitions and properties for the fractional Musielak-Sobolev spaces and prove some preliminary lemmas, which will be used in the sequel.

\subsection{Musielak-Orlicz spaces}
Let $G_{x,y}$ be a generalized N-function. In correspondence to $\widehat{G}_x=G_{x,x}$ and an open subset $\Omega$ of $\mathbb{R}^{N}$, the Musielak class is defined as follows
$$
K^{\widehat{G}_x}(\Omega)=\left\{u: \Omega \longrightarrow \mathbb{R} \text { mesurable : } \int_{\Omega} \widehat{G}_x(|u(x)|) \mathrm{d} x<+\infty\right\} \text {, }
$$
and the Musielak-Orlicz space $L^{\widehat{G}_x}(\Omega)$ is defined as follows
$$
L^{\widehat{G}_x}(\Omega)=\left\{u: \Omega \longrightarrow \mathbb{R} \text { mesurable : } J_{\widehat{G}_x}(\lambda u)<+\infty \text { for some } \lambda>0\right\},
$$
where the modular $J_{\widehat{G}_x}$ is defined as  $$J_{\widehat{G}_x}( u):=\displaystyle\int_{\Omega} \widehat{G}_x(|u(x)|) \mathrm{d} x.$$\\
The space $L^{\widehat{G}_x}(\Omega)$ is endowed with the Luxemburg norm
$$
\|u\|_{L^{\widehat{G}_{x}}(\Omega)}=\inf \left\{\lambda>0: J_{\widehat{G}_x}\left(\frac{u}{\lambda} \right)  \leqslant 1\right\} .
$$
We would like to mention that the assumptions $(g_{1})-(g_{4})$ ensure that $\left(L^{\widehat{G}_{x}}(\Omega),\|u\|_{L^{\widehat{G}_{x}}(\Omega)}\right)$ is a separable
and reflexive Banach space.\\
The relation \eqref{1.6eq2} implies that $ L^{\widehat{G}_x}(\Omega)=K^{\widehat{G}_x}(\Omega)$ (see \cite{musju1983Orlicz}).

As a consequence of \eqref{ineqYong}, we have the following Hölder's type inequality:
\begin{lemma}
 Let $\Omega$ be an open subset of $\mathbb{R}^N$. Let $\widehat{G}_x$ be a generalized $N$-function and $\widetilde{\widehat{G}}_x$ its conjugate function, then
$$
\displaystyle\left|\int_{\Omega} u v d x\right| \leq 2\|u\|_{L^{\widetilde{G}_x}(\Omega)}\|v\|_{L^{\widetilde{\widehat{G}}}(\Omega)},
$$
for all $u \in L^{\widehat{G}_x}(\Omega)$ and all $ v \in L^{\widetilde{\widehat{G}}}(\Omega).$
\end{lemma}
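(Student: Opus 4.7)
The plan is the standard Young-type argument for H\"older's inequality in the Musielak--Orlicz setting, using the Young-type inequality \eqref{ineqYong} already established in the excerpt. Replacing $u$ and $v$ by $|u|$ and $|v|$ does not affect either side of the claimed inequality, so I may assume $u, v \geq 0$. If $\|u\|_{L^{\widehat{G}_x}(\Omega)} = 0$ then $u = 0$ almost everywhere on $\Omega$ (a direct consequence of the convexity of $\widehat{G}_x$ and the fact that $\widehat{G}_x(t) > 0$ for $t > 0$), and the inequality holds trivially; the symmetric case for $v$ is handled in the same way. Hence I may assume both norms are strictly positive.

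For the main estimate, I fix $\varepsilon > 0$ and set
\[
\alpha_\varepsilon := \|u\|_{L^{\widehat{G}_x}(\Omega)} + \varepsilon, \qquad \beta_\varepsilon := \|v\|_{L^{\widetilde{\widehat{G}}_x}(\Omega)} + \varepsilon.
\]
By the very definition of the Luxemburg norm, $u/\alpha_\varepsilon$ and $v/\beta_\varepsilon$ satisfy the modular bounds $J_{\widehat{G}_x}(u/\alpha_\varepsilon) \leq 1$ and $J_{\widetilde{\widehat{G}}_x}(v/\beta_\varepsilon) \leq 1$. Applying \eqref{ineqYong} pointwise almost everywhere on $\Omega$ with $\sigma = u(x)/\alpha_\varepsilon$ and $\tau = v(x)/\beta_\varepsilon$ (noting that $\widehat{G}_x = G_{x,x}$ is obtained from $G_{x,y}$ by specializing $y = x$, so \eqref{ineqYong} indeed applies to the pair $(\widehat{G}_x, \widetilde{\widehat{G}}_x)$) gives
\[
\frac{u(x)\,v(x)}{\alpha_\varepsilon \beta_\varepsilon} \;\leq\; \widehat{G}_x\!\left(\frac{u(x)}{\alpha_\varepsilon}\right) + \widetilde{\widehat{G}}_x\!\left(\frac{v(x)}{\beta_\varepsilon}\right).
\]

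Integrating this inequality over $\Omega$ and invoking the two modular bounds immediately yields $\int_\Omega u v\, dx \leq 2\alpha_\varepsilon \beta_\varepsilon$. Letting $\varepsilon \to 0^+$ produces the desired bound with the factor $2$. The only subtle point --- and it is not really an obstacle --- is the passage from the norms themselves to the slightly enlarged constants $\alpha_\varepsilon, \beta_\varepsilon$: the infimum in the definition of the Luxemburg norm is not in general attained, so a small slack is required to guarantee that the modular bounds hold with constant $\leq 1$. The $\Delta_2$-condition \eqref{1.6eq2}, together with the identity $L^{\widehat{G}_x}(\Omega) = K^{\widehat{G}_x}(\Omega)$ quoted just above the lemma, ensures that all the modular integrals appearing in the argument are finite, so the limit $\varepsilon \to 0^+$ is unproblematic and the inequality follows.
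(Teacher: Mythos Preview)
Your argument is correct and is precisely the standard Young-inequality route the paper itself indicates: the lemma is stated there without proof, introduced simply as ``a consequence of \eqref{ineqYong}'', and your computation fills in exactly those details. One minor remark: the appeal to the $\Delta_2$-condition at the end is not really needed, since the modular bounds $J_{\widehat{G}_x}(u/\alpha_\varepsilon)\le 1$ and $J_{\widetilde{\widehat{G}}_x}(v/\beta_\varepsilon)\le 1$ already guarantee finiteness of the relevant integrals.
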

\subsection{Fractional Musielak-Sobolev spaces}
Let $G_{x,y}$ be a generalized N-function, $s\in (0,1)$ and $\Omega$ an open subset of $\mathbb{R}^{N}$, we define the fractional Musielak-Sobolev space $W^{s,G_{x, y}}(\Omega)$ as follows
$$
\begin{aligned}
&W^{s,G_{x, y}}(\Omega):=\left\{u \in L^{\widehat{G}_x}(\Omega): J_{s,G_{x, y}}( \lambda u)<+\infty \quad\textsl{ for some } \lambda>0 \right\},
\end{aligned}
$$
where the modular $J_{s,G_{x, y}}$ is defined as  $$J_{s,G_{x, y}}(u):= \int_{\Omega} \int_{\Omega} G_{x, y}\left(D_{s}u(x,y)\right)d\mu,$$
with $D_{s}u(x,y):=\displaystyle\frac{|u(x)-u(y)|}{|x-y|^{s}}$ and $d\mu:=\displaystyle\frac{\mathrm{d} x \mathrm{~d} y}{|x-y|^{N}}.$\\
It is well known that $d\mu$ is a regular Borel measure on the set $\Omega\times\Omega$.

The space $W^{s,G_{x, y}}(\Omega)$ is endowed with the norm
\begin{equation}\label{Norm}
\|u\|_{W^{s,G_{x, y}}(\Omega)}:=\|u\|_{L^{\widehat{G}_{x}}(\Omega)}+[u]_{s, G_{x, y}},
\end{equation}
where $[.]_{s, G_{x, y}}$ is the so called $(s,G_{x, y})$-Gagliardo seminorm defined by
$$
[u]_{s, G_{x, y}}=\inf \left\{\lambda>0: J_{s,G_{x, y}}\left( \frac{u}{\lambda}\right)  \leqslant 1\right\} .
$$

\begin{remark}[{see \cite[Theorem~2.1]{abss2022class}}] Since the assumption $(g_{4})$ implies that the function $G_{x, y}$ and $\widetilde{G}_{x, y}$ satisfy the $\Delta_{2}$-condition, then the space $W^{s,G_{x, y}}(\Omega)$ is a reflexive and separable Banach space. Moreover, if $t\rightarrow G_{x, y}(\sqrt{t})$ is convex on $[0,+\infty)$, then the space $W^{s,G_{x, y}}(\Omega)$ is an uniformly convex space.
\end{remark}
Now, let us recall the following technical and important results.
\begin{lemma}[{\cite[Lemma 2.2]{abss2022class}}]\label{lemmapro}
Suppose that the assumptions $(g_{1})-(g_{4})$ hold. Then, the function $G_{x, y}$ satisfies the following inequalities  $:$
\begin{equation}
G_{x, y}(\delta t) \geqslant \delta^{g^{-}} G_{x, y}(t) \quad \quad \forall t>0,\quad \forall\delta>1,
\end{equation}
\begin{equation}
G_{x, y}(\delta t) \geqslant \delta^{g^{+}} G_{x, y}(t),\quad \forall t>0, \quad \forall\delta \in(0,1),
\end{equation}
\begin{equation}
G_{x, y}(\delta t) \leqslant \delta^{g^{+}} G_{x, y}(t) \quad \forall t>0,\quad \forall\delta>1,
\end{equation}
\begin{equation}
G_{x, y}(\delta t) \leqslant \delta^{g^{-}} G_{x, y}\left(t\right)\quad \forall t>0,\quad \forall \delta \in(0,1).
\end{equation}
\end{lemma}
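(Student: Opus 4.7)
The plan is to derive all four inequalities by a standard logarithmic-derivative argument based on the two-sided bound in $(g_4)$, namely $g^{-} G_{x,y}(s)\leqslant a_{x,y}(s)s^{2}\leqslant g^{+}G_{x,y}(s)$, rewritten as $g^{-}G_{x,y}(s)\leqslant g(x,y,s)\,s\leqslant g^{+}G_{x,y}(s)$ for all $s>0$.

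First, I would fix $(x,y)\in\Omega\times\Omega$ and $t>0$, and consider the map $\varphi:(0,+\infty)\to(0,+\infty)$ defined by $\varphi(\delta):=G_{x,y}(\delta t)$. By property (iv) we have $\varphi(\delta)>0$ for every $\delta>0$, and the continuity of $\tau\mapsto g(x,y,\tau)$ given by $(g_{2})$ together with \eqref{eq1} shows that $\varphi$ is of class $C^{1}$ with $\varphi'(\delta)=t\,g(x,y,\delta t)$. Hence I can safely take the logarithmic derivative
\begin{equation*}
\frac{d}{d\delta}\log\varphi(\delta)=\frac{t\,g(x,y,\delta t)}{G_{x,y}(\delta t)}=\frac{1}{\delta}\cdot\frac{g(x,y,\delta t)\,\delta t}{G_{x,y}(\delta t)}.
\end{equation*}
The rewritten form of $(g_{4})$, applied with $s=\delta t$, yields the pointwise bound
\begin{equation*}
\frac{g^{-}}{\delta}\leqslant\frac{d}{d\delta}\log\varphi(\delta)\leqslant\frac{g^{+}}{\delta}\qquad\text{for all }\delta>0.
\end{equation*}

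Next, for $\delta>1$ I would integrate this inequality on $[1,\delta]$ to obtain $g^{-}\log\delta\leqslant\log\varphi(\delta)-\log\varphi(1)\leqslant g^{+}\log\delta$, i.e.\ $\delta^{g^{-}}G_{x,y}(t)\leqslant G_{x,y}(\delta t)\leqslant\delta^{g^{+}}G_{x,y}(t)$, which is exactly the first and third stated inequalities. For $\delta\in(0,1)$ I would integrate the same pointwise bound on $[\delta,1]$, keeping track of the sign of $\log\delta$; this yields $-g^{+}\log\delta\leqslant\log\varphi(1)-\log\varphi(\delta)\leqslant -g^{-}\log\delta$, which after rearrangement and exponentiation gives
\begin{equation*}
\delta^{g^{+}}G_{x,y}(t)\leqslant G_{x,y}(\delta t)\leqslant\delta^{g^{-}}G_{x,y}(t),
\end{equation*}
i.e.\ the second and fourth inequalities.

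There is no serious obstacle here: the argument is essentially one calculus identity plus two integrations, and the four displayed inequalities are really just the expanded form of the single chain $\delta^{g^{+}\wedge g^{-}}\leqslant G_{x,y}(\delta t)/G_{x,y}(t)\leqslant\delta^{g^{+}\vee g^{-}}$ according to whether $\delta\gtrless 1$. The only mild technical point to mention is the positivity $G_{x,y}(\delta t)>0$ needed to pass to $\log\varphi$, which I would justify by property (iv) listed immediately after \eqref{eq1.6}; and the use of the continuity of $g(x,y,\cdot)$ from $(g_{2})$ to ensure that $\varphi$ is $C^{1}$ so that the fundamental theorem of calculus applies in the integration step.
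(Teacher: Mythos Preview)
Your argument is correct and is exactly the standard logarithmic-derivative proof of these inequalities. The paper does not supply its own proof of this lemma---it simply cites \cite[Lemma~2.2]{abss2022class}---so there is nothing further to compare; your write-up is the expected derivation.
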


\begin{proposition}[{\cite[Theorem~3.35]{vigelis2011musielak}}]\label{2prop1} Let $\widehat{G}_{x}$ be a generalized N-function satisfying the $\Delta_{2}$-condition, and $u,u_{n}\in L^{\widehat{G}_{x}}(\Omega)$, $n\in \mathbb{N}$. Then the following statements are equivalent
\begin{itemize}
\item[(i)] $\displaystyle\lim_{n\longrightarrow +\infty} \|u_{n}-u\|_{L^{\widehat{G}_{x}}(\Omega)}=0.$

\item[(ii)] $\displaystyle \lim_{n\longrightarrow +\infty} \int_{\Omega}\widehat{G}_{x}(|u_{n}-u|)dx=0.$
\end{itemize} 

\end{proposition}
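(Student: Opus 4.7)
The plan is to derive both implications from the power-type bounds on $\widehat{G}_x$ supplied by Lemma \ref{lemmapro} (which are themselves consequences of assumption $(g_4)$, i.e.\ of the $\Delta_2$-condition). These bounds are the bridge that converts statements about the Luxemburg norm into statements about the modular $J_{\widehat{G}_x}$, and conversely. Throughout I will use the elementary fact, valid for any Luxemburg norm, that $\|v\|_{L^{\widehat{G}_x}(\Omega)} \leq \alpha$ iff $J_{\widehat{G}_x}(v/\alpha) \leq 1$; the nontrivial direction uses that $\widehat{G}_x$ is monotone and that Fatou's lemma gives lower semicontinuity of $J_{\widehat{G}_x}$ in $\lambda$, so the infimum defining the norm is actually attained.

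For (i)$\Rightarrow$(ii), I set $\lambda_n := \|u_n - u\|_{L^{\widehat{G}_x}(\Omega)}$. By hypothesis $\lambda_n \to 0$, so after discarding the (trivial) indices for which $\lambda_n = 0$, I may assume $\lambda_n \in (0,1)$. By the characterization above, $J_{\widehat{G}_x}((u_n-u)/\lambda_n) \leq 1$. Writing $|u_n-u| = \lambda_n \cdot |u_n-u|/\lambda_n$ with $\lambda_n \in (0,1)$, I invoke the estimate $\widehat{G}_x(\delta t) \leq \delta^{g^-}\widehat{G}_x(t)$ of Lemma \ref{lemmapro} with $\delta = \lambda_n$, obtaining
\[
\int_{\Omega} \widehat{G}_x(|u_n-u|)\,dx \;\leq\; \lambda_n^{g^-} J_{\widehat{G}_x}\!\left(\frac{u_n-u}{\lambda_n}\right) \;\leq\; \lambda_n^{g^-},
\]
which tends to $0$ because $g^- > 1$.

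For (ii)$\Rightarrow$(i), I fix an arbitrary $\varepsilon \in (0,1)$ and show $\|u_n-u\|_{L^{\widehat{G}_x}(\Omega)} \leq \varepsilon$ for all large $n$, which by the Luxemburg characterization reduces to $J_{\widehat{G}_x}((u_n-u)/\varepsilon) \leq 1$. Applying now the complementary estimate $\widehat{G}_x(\delta t) \leq \delta^{g^+}\widehat{G}_x(t)$ of Lemma \ref{lemmapro} with $\delta = 1/\varepsilon > 1$, I get
\[
J_{\widehat{G}_x}\!\left(\frac{u_n-u}{\varepsilon}\right) \;\leq\; \varepsilon^{-g^+}\int_{\Omega} \widehat{G}_x(|u_n-u|)\,dx.
\]
By hypothesis (ii) the right-hand side tends to $0$, hence it is $\leq 1$ for all sufficiently large $n$, yielding $\|u_n-u\|_{L^{\widehat{G}_x}(\Omega)} \leq \varepsilon$. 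Since $\varepsilon$ was arbitrary, (i) follows.

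The only delicate point I anticipate is the justification, in the first implication, of $J_{\widehat{G}_x}((u_n-u)/\lambda_n) \leq 1$ at the critical value $\lambda = \lambda_n$ — this requires knowing that the infimum in the definition of the Luxemburg norm is attained. This is exactly where $\Delta_2$ enters essentially (beyond its role in Lemma \ref{lemmapro}): it guarantees that the modular is finite and continuous on $L^{\widehat{G}_x}(\Omega)$, so that the map $\lambda \mapsto J_{\widehat{G}_x}((u_n-u)/\lambda)$ is well-defined and continuous on $(0,\infty)$, and the infimum is realized. Everything else in the proof is a bookkeeping application of the two one-sided power bounds from Lemma \ref{lemmapro}.
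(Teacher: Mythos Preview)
The paper does not supply its own proof of this proposition: it is quoted verbatim from \cite[Theorem~3.35]{vigelis2011musielak} and left without argument. So there is no ``paper's proof'' to compare against; what matters is whether your argument is correct in the paper's framework.

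Your proof is correct. Both implications follow cleanly from the power-type inequalities of Lemma~\ref{lemmapro} (which apply to $\widehat{G}_x$ since $\widehat{G}_x = G_{x,x}$), and the bookkeeping is right. One small remark: your ``delicate point'' about the infimum in the Luxemburg norm being attained does not actually require $\Delta_2$ --- since $t\mapsto \widehat{G}_x(t)$ is increasing and left-continuous, monotone convergence gives $J_{\widehat{G}_x}(v/\lambda_n)=\lim_{\mu\downarrow\lambda_n}J_{\widehat{G}_x}(v/\mu)\leq 1$ directly. A second remark: the proposition as stated assumes only the $\Delta_2$-condition, not the full strength of $(g_4)$ that underlies Lemma~\ref{lemmapro}. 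The classical argument uses merely convexity (and $\widehat{G}_x(0)=0$) for (i)$\Rightarrow$(ii), namely $J_{\widehat{G}_x}(u_n-u)\leq \lambda_n\,J_{\widehat{G}_x}((u_n-u)/\lambda_n)\leq\lambda_n$, and iterates the raw $\Delta_2$-inequality $\widehat{G}_x(2t)\leq K\widehat{G}_x(t)$ for (ii)$\Rightarrow$(i). Your route via Lemma~\ref{lemmapro} is perfectly adequate here because $(g_1)$--$(g_4)$ are standing assumptions throughout the paper, but it is worth knowing that the result holds under $\Delta_2$ alone.
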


\begin{proposition}\label{2prop11} Let $G_{x,y}$ be a generalized N-function satisfying the $\Delta_{2}$-condition, and $u,u_{n}\in W^{s,G_{x,y}}(\Omega)$, $n\in \mathbb{N}$. Then the following statements are equivalent
\begin{itemize}
\item[(i)] $\displaystyle\lim_{n\longrightarrow +\infty} [u_{n}-u]_{s, G_{x, y}}=0.$

\item[(ii)] $\displaystyle \lim_{n\longrightarrow +\infty} J_{s,G_{x, y}}(u_{n}-u)=0.$
\end{itemize} 

\end{proposition}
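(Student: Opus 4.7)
\medskip

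\noindent\textbf{Proof plan.} The proposition is the fractional-seminorm analogue of Proposition~\ref{2prop1}, and the plan is to adapt the standard Luxemburg-modular equivalence argument to the Gagliardo-type modular $J_{s,G_{x,y}}$, using the power-type bounds of Lemma~\ref{lemmapro} to convert between the seminorm $[\,\cdot\,]_{s,G_{x,y}}$ and the modular. Throughout set $v_n := u_n - u \in W^{s,G_{x,y}}(\Omega)$ and $\lambda_n := [v_n]_{s,G_{x,y}}$, and observe that the seminorm is actually attained, i.e. $J_{s,G_{x,y}}(v_n/\lambda_n) \leq 1$ whenever $\lambda_n > 0$; this follows from the $\Delta_2$-condition together with the monotone convergence theorem applied to the family $J_{s,G_{x,y}}(v_n/\lambda)$ as $\lambda \downarrow \lambda_n$.

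\smallskip

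\noindent For the implication $(i)\Rightarrow(ii)$, I would assume $\lambda_n \to 0$, so that eventually $\lambda_n \in (0,1)$. Writing
$$
J_{s,G_{x,y}}(v_n) = \int_\Omega\int_\Omega G_{x,y}\!\Bigl(\lambda_n \cdot \tfrac{D_s v_n(x,y)}{\lambda_n}\Bigr)\,d\mu
$$
and applying the fourth inequality of Lemma~\ref{lemmapro} with $\delta = \lambda_n \in (0,1)$ pointwise inside the double integral, one obtains
$$
J_{s,G_{x,y}}(v_n) \leq \lambda_n^{g^-}\, J_{s,G_{x,y}}(v_n/\lambda_n) \leq \lambda_n^{g^-} \longrightarrow 0.
$$

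\smallskip

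\noindent For the converse $(ii)\Rightarrow(i)$, fix an arbitrary $\varepsilon \in (0,1)$ and apply the third inequality of Lemma~\ref{lemmapro} with $\delta = 1/\varepsilon > 1$ to get
$$
J_{s,G_{x,y}}(v_n/\varepsilon) \leq \varepsilon^{-g^+}\, J_{s,G_{x,y}}(v_n).
$$
Once $n$ is large enough that $J_{s,G_{x,y}}(v_n) \leq \varepsilon^{g^+}$ (which is possible by hypothesis), the right-hand side is at most $1$, and the definition of the Luxemburg-type seminorm yields $[v_n]_{s,G_{x,y}} \leq \varepsilon$. Since $\varepsilon$ is arbitrary, $\lambda_n \to 0$.

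\smallskip

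\noindent The only subtle point — and what I expect to be the main technical check — is the attainment claim $J_{s,G_{x,y}}(v_n/\lambda_n) \leq 1$ used in direction $(i)\Rightarrow(ii)$; this requires invoking $\Delta_2$ to control the integrand uniformly as $\lambda \downarrow \lambda_n$ and then passing to the limit. Everything else reduces to a direct application of the four growth inequalities of Lemma~\ref{lemmapro}, in complete parallel with the proof of Proposition~\ref{2prop1}.
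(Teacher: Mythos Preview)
Your proposal is correct and matches the paper's own treatment, which simply states that the proof ``follows similar techniques as those used in proof of Proposition~\ref{2prop1}'' without writing out any details. Your explicit argument via the growth inequalities of Lemma~\ref{lemmapro} is precisely what adapting the Musielak--Orlicz modular/norm equivalence to the Gagliardo modular amounts to, so there is nothing to add; one minor remark is that the attainment $J_{s,G_{x,y}}(v_n/\lambda_n)\le 1$ already follows from Fatou's lemma alone (as $\lambda\downarrow\lambda_n$), so the appeal to $\Delta_2$ there is not strictly necessary.
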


The proof of Proposition \ref{2prop11} follows similar techniques as those used in proof of Proposition \ref{2prop1}.

\begin{proposition}[{\cite[Remark~2.1]{ayya2020someapp}}]\label{2prop2} Suppose that $\widehat{G}_{x}$ is a generalized N-function satisfying \eqref{eq1.3} and the $\Delta_{2}$-condition. Then $C^{\infty}_{0}(\Omega)$ is dense in $ (L^{\widehat{G}_{x}}(\Omega), \|.\|_{L^{\widehat{G}_{x}}(\Omega)})$.

\end{proposition}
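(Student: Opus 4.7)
The plan is to combine a standard truncation step with a mollification step, exploiting the $\Delta_2$-condition to pass freely between modular and norm convergence (via Proposition \ref{2prop1}) and the local integrability \eqref{eq1.3} to control the variable-exponent integrand on bounded sets.

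Given $u\in L^{\widehat{G}_x}(\Omega)$, I would first fix an exhaustion by compact sets $K_n\Subset\Omega$ with $K_n\uparrow\Omega$ and set
\[
v_n(x):=u(x)\,\mathbf{1}_{K_n}(x)\,\mathbf{1}_{\{|u|\leq n\}}(x).
\]
Each $v_n$ is bounded with compact support in $\Omega$, and $v_n\to u$ pointwise a.e. Since $|u-v_n|\leq 2|u|$, the $\Delta_2$-condition yields $\widehat{G}_x(|u-v_n|)\leq K\,\widehat{G}_x(|u|)\in L^1(\Omega)$, so dominated convergence gives $\int_\Omega\widehat{G}_x(|u-v_n|)\,dx\to 0$, hence $v_n\to u$ in $L^{\widehat{G}_x}(\Omega)$ by Proposition \ref{2prop1}.

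Next, let $\rho_\epsilon$ be a standard mollifier and set $w_{n,\epsilon}:=v_n*\rho_\epsilon$. Choosing $\epsilon<\operatorname{dist}(K_n,\partial\Omega)$ ensures $w_{n,\epsilon}\in C_0^\infty(\Omega)$. Writing $w_{n,\epsilon}(x)-v_n(x)=\int\rho_\epsilon(x-y)(v_n(y)-v_n(x))\,dy$ and applying Jensen's inequality to the convex function $\widehat{G}_x$ with respect to the probability density $\rho_\epsilon(x-\cdot)$,
\[
\widehat{G}_x\bigl(|w_{n,\epsilon}(x)-v_n(x)|\bigr)\leq \int_{\mathbb{R}^N}\rho_\epsilon(x-y)\,\widehat{G}_x\bigl(|v_n(y)-v_n(x)|\bigr)\,dy.
\]
For $\epsilon$ small, the integrand vanishes outside a fixed compact enlargement $A$ of $K_n$. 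Since $|v_n|\leq n$, the right-hand side is bounded pointwise by $\widehat{G}_x(2n)$, which is integrable over $A$ by \eqref{eq1.3}. Classical mollification gives $w_{n,\epsilon}\to v_n$ pointwise a.e.\ along a subsequence, and then dominated convergence yields $\int_\Omega\widehat{G}_x(|w_{n,\epsilon}-v_n|)\,dx\to 0$. Another application of Proposition \ref{2prop1} gives $w_{n,\epsilon}\to v_n$ in $L^{\widehat{G}_x}(\Omega)$, and a diagonal choice $\epsilon_n\to 0$ produces $w_{n,\epsilon_n}\in C_0^\infty(\Omega)$ with $w_{n,\epsilon_n}\to u$.

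The main delicate point I anticipate is the second step: because $\widehat{G}_x$ genuinely depends on $x$, the usual translation-invariance of the modular is unavailable, and one must localize before comparing. The Jensen estimate above confines the problem to a fixed compact set, and it is precisely the local integrability assumption \eqref{eq1.3} that supplies the uniform integrable majorant $\widehat{G}_x(2n)$ needed for dominated convergence. Without \eqref{eq1.3}, the approximation could fail because mollification smears $x$-dependent behavior that the modular resolves; with it, the classical argument goes through essentially unchanged.
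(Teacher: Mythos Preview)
Your proof is correct. Note that the paper does not actually prove this proposition---it is cited from \cite[Remark~2.1]{ayya2020someapp}---so there is no in-paper argument to compare against directly. However, the paper separately cites Lemma~\ref{lemma22} (density of $\mathcal{B}_c(\Omega)$ in $L^{\widehat{G}_x}(\Omega)$) and Lemma~\ref{lemmacorollary} (mollification of $\mathcal{B}_c$ functions converges in norm), and these two together yield the proposition by precisely the two-step scheme you carry out: your truncation $v_n$ reproduces Lemma~\ref{lemma22}, and your mollification $w_{n,\epsilon}$ reproduces Lemma~\ref{lemmacorollary}. One small remark: you do not need to pass to a subsequence for the pointwise a.e.\ convergence of $w_{n,\epsilon}$ to $v_n$; since $v_n\in L^1_{\mathrm{loc}}$, mollifiers converge at every Lebesgue point, hence a.e.\ as $\epsilon\to 0$.
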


Now, let us show the following lemma.
\begin{lemma}\label{lemreg}
Let $\Omega$ be an open subset of $\mathbb{R}^{N}$ and let $u \in C_0^{\infty}\left(\Omega\right)$. Assume that $(g_{1})-(g_{4})$ hold and $G_{x,y}\in \mathcal{B}_{f}$. Then
$$
\int_{\Omega} \int_{\Omega} G_{x, y}\left( \frac{|u(x)-u(y)|}{|x-y|^{s}}\right)  \frac{dx dy}{|x-y|^{N}} < +\infty .
$$

\end{lemma}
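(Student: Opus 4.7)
The plan is to exploit the regularity of $u\in C_0^\infty(\Omega)$, namely that $L:=\|\nabla u\|_\infty$ and $M:=\|u\|_\infty$ are finite and $K:=\mathrm{Supp}\,u$ is a compact subset of $\Omega$. First I would observe that the integrand vanishes on $(\Omega\setminus K)\times(\Omega\setminus K)$, so the double integral reduces to an integral over $(K\times\Omega)\cup(\Omega\times K)$, which by the symmetry of the measure $d\mu$ amounts (up to a factor 2) to $x\in K$, $y\in\Omega$. Then I would split the $y$--integration according to $|x-y|\le 1$ and $|x-y|>1$, and estimate each piece separately using the two elementary bounds
$$|u(x)-u(y)|\le L\,|x-y| \qquad\text{and}\qquad |u(x)-u(y)|\le 2M.$$

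For the near-diagonal piece $(|x-y|\le 1)$ the Lipschitz bound gives $D_s u(x,y)\le L|x-y|^{1-s}$. Writing this value as $\delta\cdot 1$ with $\delta:=L|x-y|^{1-s}$, I would apply Lemma \ref{lemmapro} (either the $\delta^{g^-}$ bound when $\delta\le 1$ or the $\delta^{g^+}$ bound when $\delta\ge 1$) together with the condition $G_{x,y}\in\mathcal{B}_f$, which furnishes $G_{x,y}(1)\le C_2$. This yields
$$G_{x,y}\!\bigl(D_s u(x,y)\bigr)\le C_2\,\max\!\bigl(L^{g^-}|x-y|^{(1-s)g^-},\,L^{g^+}|x-y|^{(1-s)g^+}\bigr).$$
Dividing by $|x-y|^N$ and integrating in polar coordinates centered at $x\in K$ over $\{|x-y|\le 1\}$ produces integrals of the form $\int_0^1 r^{(1-s)g^{\pm}-1}\,dr$, which are finite because $g^->1>0$ and $s\in(0,1)$; the outer $x$--integration over the compact set $K$ is then trivially finite.

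For the far piece $(|x-y|>1)$ the $L^\infty$--bound gives $D_s u(x,y)\le 2M|x-y|^{-s}\le 2M$. The same dichotomy $\delta\lessgtr 1$ (now with $\delta=2M|x-y|^{-s}$, which is bounded and tends to $0$ as $|x-y|\to\infty$) together with Lemma \ref{lemmapro} and $G_{x,y}(1)\le C_2$ gives
$$G_{x,y}\!\bigl(D_s u(x,y)\bigr)\le C_2\max\!\bigl((2M)^{g^-}|x-y|^{-sg^-},(2M)^{g^+}|x-y|^{-sg^+}\bigr).$$
After dividing by $|x-y|^N$ and integrating in $y\in\{|x-y|>1\}$, the relevant one-dimensional integrals $\int_1^\infty r^{-sg^{\pm}-1}\,dr$ converge because $s>0$ and $g^->1$; the outer integral in $x\in K$ is again finite by compactness of $K$. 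Adding the two pieces gives the desired finiteness.

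The only mildly delicate point is to package the two growth regimes of Lemma \ref{lemmapro} uniformly, so that the estimate applies whether $L|x-y|^{1-s}$ (respectively $2M|x-y|^{-s}$) is $\le 1$ or $\ge 1$; but since there are just two regimes and on each one the integrals are computed by elementary polar coordinates, no genuine obstacle arises, and the lemma follows.
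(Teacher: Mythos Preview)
Your argument is correct and follows essentially the same route as the paper's: bound $|u(x)-u(y)|$ via the Lipschitz and $L^\infty$ estimates, split into $|x-y|\le 1$ and $|x-y|>1$, and use Lemma~\ref{lemmapro} together with $G_{x,y}\in\mathcal{B}_f$ to reduce to elementary radial integrals. One minor caveat: your ``factor $2$ by symmetry'' step tacitly uses $G_{x,y}=G_{y,x}$, which is not among the hypotheses---simply treat the $\Omega\times K$ piece by the identical computation with the roles of $x$ and $y$ swapped (incidentally, your explicit restriction of the outer variable to the compact set $K$ is cleaner than the paper's version, which does not spell out why the outer integral remains finite when $\Omega$ is unbounded).
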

\begin{proof}

 Let $u \in C_0^\infty(\Omega)$, then $u \in L^{\widehat{G}_x}(\Omega)$. On the one hand, we have
$$
|u(x)-u(y)| \leqslant\|\nabla u\|_{L^{\infty}(\Omega)}|x-y| \quad \text { and } \quad|u(x)-u(y)| \leqslant\left. 2\| u\right\|_{L^{\infty}(\Omega)} .
$$
Accordingly, we get
$$
|u(x)-u(y)| \leqslant 2\| u\|_{C^1(\Omega)} \min \{1,|x-y|\}:=\beta \delta(x, y),
$$
with $\beta=2|| u \|_{C^1(\Omega)}$ and $\delta(x, y)=\min \{1,|x-y|\}$. Then, by Lemma \ref{lemmapro} we have
$$
\begin{aligned}
& \int_{\Omega} \int_{\Omega} G_{x, y}\left(\frac{|u(x)-u(y)|}{|x-y|^s}\right) \frac{\mathrm{d} x \mathrm{~d} y}{|x-y|^N} \quad \leqslant \int_{\Omega} \int_{\Omega} G_{x, y}\left(\frac{\beta\delta(x, y)}{|x-y|^s}\right) \frac{\mathrm{d} x \mathrm{~d} y}{|x-y|^N} \\
& \quad \leqslant C\int_{\Omega} \int_{\Omega \cap|x-y| \leqslant 1} G_{x, y}\left(\frac{\delta(x, y)}{|x-y|^s}\right) \frac{\mathrm{d} x \mathrm{~d} y}{|x-y|^N}\\
&+C\int_{\Omega} \int_{\Omega \cap|x-y| \geqslant 1} G_{x, y}\left(\frac{\delta(x, y)}{|x-y|^s}\right) \frac{\mathrm{d} x \mathrm{~d} y}{|x-y|^N} \\
& \quad=C\int_{\Omega} \int_{\Omega \cap|x-y| \leqslant 1} G_{x, y}\left(\frac{|x-y|}{|x-y|^s}\right) \frac{\mathrm{d} x \mathrm{~d} y}{|x-y|^N}\\
&+C\int_{\Omega} \int_{\Omega \cap|x-y| \geqslant 1} G_{x, y}\left(\frac{1}{|x-y|^s}\right) \frac{\mathrm{d} x \mathrm{~d} y}{|x-y|^N}
\end{aligned}
$$
$$
\begin{aligned}
& \leqslant C\sup _{(x, y) \in \Omega \times \Omega} G_{x, y}(1)\left(\int_{\Omega} \int_{\Omega \cap|x-y| \leqslant 1} \frac{\mathrm{d} x \mathrm{~d} y}{|x-y|^{N+s-1}}+\int_{\Omega} \int_{\Omega \cap|x-y| \geqslant 1} \frac{\mathrm{d} x \mathrm{~d} y}{|x-y|^{N+s}}\right) \\
& =C\sup _{(x, y) \in \Omega \times \Omega} G_{x, y}(1)\left(I_1+I_2\right) .
\end{aligned}
$$
where the constant $C$ depends on $\beta$, $g^{+}$ and $g^{-}$.
On the other hand, since $N+s-1<N$, then the kernel $|x-y|^{-(N+s-1)}$ is summable with respect to $y$ if $|x-y| \leqslant 1$, hence $I_1$ is finite. On the other hand, as $N+s>N$, it follows that the kernel \\$\mid x-$ $\left.y\right|^{-(N+s)}$ is summable when $|x-y| \geqslant 1$, thus, $I_2$ is finite. Consequently, the two integrals above are finite, which complete the proof.
\end{proof}

Now we give two approximation results.

\begin{lemma}\label{2lem2}
 Let $u \in L^{\widehat{G}_{x}}\left(\mathbb{R}^N\right)$. Then there exists a sequence of functions $u_n \in$ $L^{\widehat{G}_{x}}\left(\mathbb{R}^N\right) \cap L^{\infty}\left(\mathbb{R}^N\right)$ such that
$$
\left\|u-u_n\right\|_{L^{\widehat{G}_{x}}\left(\mathbb{R}^N\right)} \longrightarrow 0 \quad \text { as } \quad n \longrightarrow+\infty .
$$
\end{lemma}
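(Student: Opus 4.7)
The plan is to use the standard truncation at height $n$, and conclude via the modular-norm equivalence in Proposition~\ref{2prop1}.

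For each $n\in\mathbb{N}$, define
$$
u_n(x):=\begin{cases} u(x) & \text{if } |u(x)|\leqslant n,\\ n\,\mathrm{sgn}(u(x)) & \text{if } |u(x)|> n.\end{cases}
$$
Then by construction $\|u_n\|_{L^{\infty}(\mathbb{R}^N)}\leqslant n$ and $|u_n(x)|\leqslant |u(x)|$ pointwise, which together with the monotonicity of $\widehat{G}_x$ gives $u_n\in L^{\widehat{G}_x}(\mathbb{R}^N)\cap L^{\infty}(\mathbb{R}^N)$.

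Next, I would observe that $u_n(x)\to u(x)$ pointwise for every $x\in\mathbb{R}^N$, so by continuity of $\widehat{G}_x$ one has $\widehat{G}_x(|u(x)-u_n(x)|)\to 0$ pointwise. Moreover, a direct case check shows $|u(x)-u_n(x)|\leqslant |u(x)|$, so
$$
\widehat{G}_x\bigl(|u(x)-u_n(x)|\bigr)\leqslant \widehat{G}_x(|u(x)|).
$$
The crucial input now is that, thanks to the $\Delta_2$-condition \eqref{1.6eq2} coming from $(g_4)$, one has $L^{\widehat{G}_x}(\mathbb{R}^N)=K^{\widehat{G}_x}(\mathbb{R}^N)$, so the dominating function $x\mapsto \widehat{G}_x(|u(x)|)$ is integrable on $\mathbb{R}^N$. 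Lebesgue's dominated convergence theorem then yields
$$
\lim_{n\to +\infty}\int_{\mathbb{R}^N}\widehat{G}_x\bigl(|u(x)-u_n(x)|\bigr)\,dx=0.
$$

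Finally, this is exactly the modular convergence $J_{\widehat{G}_x}(u-u_n)\to 0$, and by Proposition~\ref{2prop1} (which applies since $\widehat{G}_x\in\Delta_2$) this is equivalent to $\|u-u_n\|_{L^{\widehat{G}_x}(\mathbb{R}^N)}\to 0$, which is the desired conclusion. The only subtle point is the use of the $\Delta_2$-condition to upgrade $u\in L^{\widehat{G}_x}$ to $\widehat{G}_x(|u|)\in L^1$; without it, one would only have integrability of $\widehat{G}_x(\lambda|u|)$ for some $\lambda>0$, which would not suffice to dominate $\widehat{G}_x(|u-u_n|)$ directly.
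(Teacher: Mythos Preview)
Your proof is correct and follows essentially the same approach as the paper: truncation at height $n$, pointwise convergence, domination of $\widehat{G}_x(|u-u_n|)$ by $\widehat{G}_x(|u|)\in L^1$, dominated convergence, and then Proposition~\ref{2prop1} to pass from modular to norm convergence. If anything, your write-up is slightly more careful than the paper's in that you explicitly bound $|u-u_n|\leqslant |u|$ and flag the role of the $\Delta_2$-condition in ensuring integrability of the dominant.
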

\begin{proof}
 Let us set
$$
u_n(x):= \begin{cases}n & \text { if } u(x) \geq n, \\ u(x) & \text { if } u(x) \in(-n, n), \\ -n & \text { if } u(x) \leq-n .\end{cases}
$$
We have
$$
u_n \longrightarrow u \text { a.e. in } \mathbb{R}^N
$$
and
$$
\widehat{G}_{x}\left( \left|u_n(x)\right|\right)  \leq \widehat{G}_{x}\left( |u(x)|\right) \in L^1\left(\mathbb{R}^N\right) .
$$
Hence the result follows from the dominated convergence theorem and Proposition \ref{2prop1}.
\end{proof}
\begin{lemma}\label{2lem3}
 Let $u \in L^{G_{x,y}}\left(\mathbb{R}^N \times \mathbb{R}^N,d\mu\right)$. Then there exists a sequence of functions $u_k \in L^{G_{x,y}}\left(\mathbb{R}^N \times \mathbb{R}^N,d\mu\right) \cap L^{\infty}\left(\mathbb{R}^N \times \mathbb{R}^N,d\mu\right)$ such that
$$
\left\|u-u_k\right\|_{L^{G_{x,y}}\left(\mathbb{R}^N \times \mathbb{R}^N,d\mu\right)} \longrightarrow 0 \quad \text { as } \quad k \longrightarrow+\infty \text {. }
$$
\end{lemma}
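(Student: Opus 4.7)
The plan is to mimic the proof of Lemma \ref{2lem2}, the only change being that the underlying measure space is $(\mathbb{R}^N\times\mathbb{R}^N,d\mu)$ rather than $(\mathbb{R}^N,dx)$ and that the generalized $N$-function is now $G_{x,y}$ instead of $\widehat{G}_x$. Concretely, I would define the truncation
$$
u_k(x,y):=\begin{cases} k & \text{if } u(x,y)\geq k,\\ u(x,y) & \text{if } u(x,y)\in(-k,k),\\ -k & \text{if } u(x,y)\leq -k,\end{cases}
$$
so that $|u_k|\leq |u|$ pointwise and $u_k\to u$ almost everywhere with respect to $d\mu$. Boundedness by $k$ immediately gives $u_k\in L^\infty(\mathbb{R}^N\times\mathbb{R}^N,d\mu)$. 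Since $G_{x,y}$ is nondecreasing in its last argument, $G_{x,y}(|u_k|)\leq G_{x,y}(|u|)$; moreover, the $\Delta_2$-condition coming from $(g_4)$ ensures $L^{G_{x,y}}=K^{G_{x,y}}$ (the modular is finite on every element of the space, by the same argument as recalled after equation \eqref{1.6eq2}), so $G_{x,y}(|u|)\in L^1(d\mu)$ and hence $u_k\in L^{G_{x,y}}(\mathbb{R}^N\times\mathbb{R}^N,d\mu)$.

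Next, I would run a dominated convergence argument on the modular: by construction $|u-u_k|\leq|u|\,\mathbf{1}_{\{|u|\geq k\}}\to 0$ almost everywhere, hence $G_{x,y}(|u-u_k|)\to 0$ pointwise and is dominated by $G_{x,y}(|u|)\in L^1(d\mu)$. Therefore
$$
\int_{\mathbb{R}^N\times\mathbb{R}^N} G_{x,y}\bigl(|u(x,y)-u_k(x,y)|\bigr)\,d\mu \longrightarrow 0 \quad\text{as }k\to+\infty.
$$
To pass from modular convergence to Luxemburg-norm convergence, I would invoke the standard equivalence between these two notions in a Musielak-Orlicz space under the $\Delta_2$-condition; this is precisely the content of Proposition \ref{2prop1}, and its proof is purely measure-theoretic and transfers verbatim to the $\sigma$-finite measure space $(\mathbb{R}^N\times\mathbb{R}^N,d\mu)$ with generator $G_{x,y}$ (cf.\ also Proposition \ref{2prop11}, which already adapts it to the Gagliardo setting).

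There is no real obstacle here; the only point deserving some care is checking that the modular/norm equivalence is genuinely available in the product setting with the kernel measure $d\mu$ rather than Lebesgue measure, but this is handled by the remark that $(g_4)$ forces the $\Delta_2$-condition for $G_{x,y}$ uniformly in $(x,y)$, which is all that the classical proof uses.
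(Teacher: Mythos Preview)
Your proposal is correct and follows essentially the same approach as the paper: define the same truncation $u_k$, use monotonicity to dominate $G_{x,y}(|u-u_k|)$ by $G_{x,y}(|u|)\in L^1(d\mu)$, apply dominated convergence to the modular, and then invoke the $\Delta_2$-based equivalence of modular and norm convergence (Proposition~\ref{2prop1}). The only difference is that you spell out a few details (the fact that $L^{G_{x,y}}=K^{G_{x,y}}$ under $\Delta_2$, and the transfer of Proposition~\ref{2prop1} to the measure space $(\mathbb{R}^N\times\mathbb{R}^N,d\mu)$) that the paper leaves implicit.
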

\begin{proof}
 Let us set
$$
u_k(x, y):= \begin{cases}k & \text { if } u(x, y) \geq k \\ u(x, y) & \text { if } u(x, y) \in(-k, k), \\ -k & \text { if } u(x, y) \leq-k.\end{cases}
$$
We have
$$
u_k \longrightarrow u \text { a.e. in } \mathbb{R}^N \times \mathbb{R}^N
$$
and
$$
G_{x,y}\left(\left|u_k(x, y)\right|\right) \leq G_{x,y}\left(|u(x, y)|\right) \in L^1\left(\mathbb{R}^N \times \mathbb{R}^N,d\mu\right) .
$$
Thus by the dominated convergence theorem and Proposition \ref{2prop1}, the claim follows.
\end{proof}

 From now on, $\mathcal{B}_c(\Omega)$ will stand for the set of bounded functions compactly supported in $\Omega$, $B_{R}$ will denote the ball centered at $0$ with radius $R>0$.

For $h \in \mathbb{R}^N$, let $T_h u$ stand for the translation operator defined by
$$
T_h u(x)= \begin{cases}u(x+h) & \text { if } x \in \Omega \text { and } x+h \in \Omega, \\ 0 & \text { otherwise in } \mathbb{R}^N .\end{cases}
$$

If the function $u$ has a compact support, $T_h u$ is well-defined provided that $|h|<$ $\operatorname{dist}(\operatorname{Supp} u, \partial \Omega$ ).

\begin{theorem} [{\cite[Theorem~2.1]{ayya2020someapp}}]\label{Theo33}
 Let $\widehat{G}_{x}$ be a generalized N-function satisfying \eqref{eq1.3}. Let $u \in \mathcal{B}_c(\Omega)$, then for every $\varepsilon>0$ there exists an $\eta=\eta(\varepsilon)>0$ such that for $h \in \mathbb{R}^N$ with $|h|<\eta$ we have
$$
\left\|T_h u-u\right\|_{L^{\widehat{G}_{x}}(\Omega)}<\varepsilon .
$$
\end{theorem}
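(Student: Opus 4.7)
The plan is to reduce the norm statement to modular convergence via the $\Delta_{2}$-condition, and then prove the modular convergence by the standard two-step route: first handle a continuous compactly supported $u$ using uniform continuity, and then approximate a general $u \in \mathcal{B}_{c}(\Omega)$ by such a function via Lusin's theorem. The delicate point throughout is that $\widehat{G}_{x}$ depends on $x$, so translation does \emph{not} preserve the Luxemburg norm, and the naive identity $\|T_{h} w\|_{L^{\widehat{G}_{x}}} = \|w\|_{L^{\widehat{G}_{x}}}$ is unavailable.

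\emph{Step 1: Reduction.} Assumption $(g_{4})$ forces $\widehat{G}_{x} \in \Delta_{2}$, so by Proposition \ref{2prop1} the desired conclusion will follow once we prove
\[
\lim_{|h| \to 0} J_{\widehat{G}_{x}}(T_{h} u - u) \;=\; \lim_{|h| \to 0} \int_{\Omega} \widehat{G}_{x}\bigl(|T_{h} u(x) - u(x)|\bigr)\,dx \;=\; 0.
\]

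\emph{Step 2: Continuous compactly supported $u$.} Suppose first that $u \in C_{c}(\Omega)$ with $K := \operatorname{Supp} u$ compact in $\Omega$, and fix $\eta_{0} > 0$ so small that $K' := K + \overline{B_{\eta_{0}}}$ is still a compact subset of $\Omega$. For every $|h| < \eta_{0}$ the integrand above vanishes outside $K'$. Uniform continuity of $u$ gives $\|T_{h} u - u\|_{L^{\infty}(\mathbb{R}^{N})} \to 0$, and monotonicity of $\widehat{G}_{x}$ supplies the dominating function $\widehat{G}_{x}(2\|u\|_{\infty})\mathbf{1}_{K'}(x)$, which is integrable by $(H_{1})$. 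Dominated convergence closes this case.

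\emph{Step 3: General $u$ and main obstacle.} With $|u| \le M$, $K = \operatorname{Supp} u$ compact in $\Omega$, and $\eta_{0}$, $K'$ as above, Lusin's theorem produces, for any $\delta > 0$, a function $\varphi \in C_{c}(\Omega)$ with $\|\varphi\|_{\infty} \le M$, $\operatorname{Supp} \varphi \subset K'$ and $|\{u \neq \varphi\}| < \delta$. For $|h| < \eta_{0}$, both $|u - \varphi|$ and $|T_{h} u - T_{h} \varphi|$ are bounded by $2M$ and supported in subsets of $K'$ of measure $< \delta$, so
\[
J_{\widehat{G}_{x}}(u - \varphi) + J_{\widehat{G}_{x}}(T_{h} u - T_{h} \varphi) \;\le\; 2 \int_{E_{\delta}} \widehat{G}_{x}(2M)\,dx,
\]
where $E_{\delta} \subset K'$ with $|E_{\delta}| < \delta$. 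Since $\int_{K'} \widehat{G}_{x}(2M)\,dx$ is finite by $(H_{1})$, absolute continuity of the Lebesgue integral makes the right-hand side arbitrarily small. This is the crucial step: by confining everything to a \emph{fixed} compact set, the $x$-dependence of $\widehat{G}_{x}$ ceases to obstruct the estimate. A triangle inequality in $L^{\widehat{G}_{x}}(\Omega)$, combined with Step 2 applied to $\varphi$, then yields $\|T_{h} u - u\|_{L^{\widehat{G}_{x}}(\Omega)} < \varepsilon$ for $|h|$ small, completing the proof.
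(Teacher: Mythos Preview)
The paper does not supply its own proof of this statement: Theorem~\ref{Theo33} is simply quoted from \cite[Theorem~2.1]{ayya2020someapp}, so there is no in-paper argument against which to compare your proposal.

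That said, your argument is the standard route and is essentially correct. Two small remarks. First, in Step~1 you invoke $(g_{4})$ to obtain the $\Delta_{2}$-condition and then apply Proposition~\ref{2prop1}; this is legitimate within the present paper, where $(g_{1})$--$(g_{4})$ are standing hypotheses, although the theorem as stated only assumes the local integrability \eqref{eq1.3}. Second, in Step~3 the set $\{T_{h}u\neq T_{h}\varphi\}=\{u\neq\varphi\}-h$ need not lie in $K'=K+\overline{B_{\eta_{0}}}$; it lies rather in $K'-h\subset K+\overline{B_{2\eta_{0}}}$. This is harmless once you enlarge the fixed compact set to $K''=K+\overline{B_{2\eta_{0}}}$ (still compact in $\Omega$ for $\eta_{0}$ chosen small enough), so that the same absolute-continuity argument with $\int_{K''}\widehat{G}_{x}(2M)\,dx<\infty$ applies. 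With this minor adjustment the proof goes through.
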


\begin{remark}[ {\cite[Remark~2.2]{ayya2020someapp}}]
The boundedness of the function $u$ in Theorem \ref{Theo33} is
necessary, else the result is false. Indeed, for the particular case $\widehat{G}_{x}(t)=t^{p(x)}$, the authors in \cite{kovavcik1991spaces} consider the following example: $N=1, \Omega=(-1,1)$. For $1 \leqslant r<d<+\infty$ they define the variable exponent
$$
p(x)= \begin{cases}r & \text { if } x \in[0,1), \\ d & \text { if } x \in(-1,0)\end{cases}
$$
and consider the function
$$
f(x)= \begin{cases}x^{-1 / d} & \text { if } x \in[0,1), \\ 0 & \text { if } x \in(-1,0) .\end{cases}
$$

They show that $T_h f \notin L^{p(\cdot)}(\Omega)$ although $f \in L^{p(\cdot)}(\Omega)$. note that, in this example, the function $f$ is compactly supported but not bounded on $\Omega$.

\end{remark}
The proofs of Theorem \ref{theo1} and Theorem \ref{theo2} are mainly based on a basic technique of convolution (which makes functions $C^{\infty}$), joined with a cut-off (which makes their support compact). Here we will give some properties of these operations with respect to the norm in \eqref{Norm}.\\

Let $J$ stand for the Friedrichs mollifier kernel defined on $\mathbb{R}^N$ by
$$
J(x)= \begin{cases}k \mathrm{e}^{-1 /\left(1-|x|^2\right)} & \text { if }|x|<1, \\ 0 & \text { if } |x| \geqslant 1,\end{cases}
$$
where $k>0$ is such that $\displaystyle\int_{\mathbb{R}^{N}} J(x) \mathrm{d} x=\int_{B_{1}} J(x) \mathrm{d} x=1$.\\
 For $\varepsilon>0$, we define $J_{\varepsilon}(x)=\varepsilon^{-N} J\left(x/ \varepsilon\right)$ and for any $u \in W^{s,G_{x,y}}\left(\mathbb{R}^N\right)$, let us denote by $u_{\varepsilon}$ the function defined as the convolution between $u$ and $J_{\varepsilon}$ ; that is,
$$
u_{\varepsilon}(x)=(u*J_{\varepsilon})(x)=\int_{\mathbb{R}^N} J_{\varepsilon}(x-y) u(y) \mathrm{d} y=\int_{B_{1}} u(x-\varepsilon y) J(y) \mathrm{d} y.
$$
Of course, by construction, $u_{\varepsilon}$ is a smooth function, i.e. $u_{\varepsilon} \in C^{\infty}\left(\mathbb{R}^N\right)$. On the other hand, if $u$ is supported in $\Omega$ it is not possible, in general, to conclude that $u_{\varepsilon} \in W_0^{s, G_{x,y}}(\Omega)$, since the support of $u_{\varepsilon}$ may exceed the one of $u$ and so it may exit $\Omega$. 
\begin{lemma}[{\cite[ Corollary 2.1]{ayya2020someapp}}]\label{lemmacorollary}  Let $\widehat{G}_{x}$ be a generalized N-function satisfying \eqref{eq1.3} and let $u \in \mathcal{B}_c(\Omega)$. For any $\varepsilon>0$ small enough, we have $u_{\varepsilon} \in C_0^{\infty}(\Omega)$. Moreover,
$$
\left\|u_{\varepsilon}-u\right\|_{L^{\widehat{G}_{x}}(\Omega)} \rightarrow 0 \quad \text { as } \varepsilon \rightarrow 0 .
$$
\end{lemma}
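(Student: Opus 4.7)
The plan is to handle the two assertions separately, exploiting that $u \in \mathcal{B}_c(\Omega)$ (bounded with compact support), together with the convexity of $\widehat{G}_x$ and the translation continuity from Theorem \ref{Theo33}.

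\textbf{Smoothness and support.} Let $K := \operatorname{Supp} u \subset \Omega$ and $d := \operatorname{dist}(K, \partial\Omega) > 0$. Standard properties of mollifiers give $u_\varepsilon \in C^\infty(\mathbb{R}^N)$, and $\operatorname{Supp} u_\varepsilon \subseteq K + \overline{B_\varepsilon}$. Choosing $\varepsilon < d/2$, the enlarged compact set $K' := K + \overline{B_{d/2}}$ still lies in $\Omega$, so $u_\varepsilon \in C_0^\infty(\Omega)$.

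\textbf{Modular convergence.} By Proposition \ref{2prop1}, it suffices to show that
\[
\int_\Omega \widehat{G}_x\bigl(|u_\varepsilon(x) - u(x)|\bigr)\,dx \longrightarrow 0 \quad\text{as }\varepsilon \to 0.
\]
Since $J_\varepsilon(y)\,dy$ is a probability measure and $\widehat{G}_x$ is convex in $t$, Jensen's inequality applied pointwise in $x$ gives
\[
\widehat{G}_x\bigl(|u_\varepsilon(x) - u(x)|\bigr) \leq \int_{B_1} \widehat{G}_x\bigl(|u(x - \varepsilon y) - u(x)|\bigr) J(y)\,dy.
\]
Integrating over $\Omega$ and using Fubini (both sides are supported in the fixed compact set $K'$ for $\varepsilon$ small), we arrive at
\[
\int_\Omega \widehat{G}_x\bigl(|u_\varepsilon - u|\bigr)\,dx \leq \int_{B_1} J(y) \left( \int_\Omega \widehat{G}_x\bigl(|T_{-\varepsilon y} u(x) - u(x)|\bigr)\,dx \right) dy.
\]

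\textbf{Passing to the limit.} For each fixed $y \in B_1$, Theorem \ref{Theo33} yields $\|T_{-\varepsilon y} u - u\|_{L^{\widehat{G}_x}(\Omega)} \to 0$ as $\varepsilon \to 0$, and therefore, by Proposition \ref{2prop1} again, the inner modular integral tends to $0$. To justify interchanging limit and the integral over $B_1$, note that $|T_{-\varepsilon y} u(x) - u(x)| \leq 2\|u\|_{L^\infty(\Omega)} =: 2M$ and both $u$ and its translate are supported in $K'$ for $\varepsilon < d/2$; thus $\widehat{G}_x(|T_{-\varepsilon y} u - u|) \leq \chi_{K'}(x)\widehat{G}_x(2M)$, which is integrable by assumption \eqref{eq1.3}. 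The outer integrand is therefore uniformly bounded by the integrable quantity $J(y)\int_{K'} \widehat{G}_x(2M)\,dx$, and dominated convergence finishes the argument.

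\textbf{Main obstacle.} The delicate point is the interplay between the $x$-dependence of $\widehat{G}_x$ and the convolution: Jensen must be applied at each fixed $x$ (so that convexity in $t$ alone is used), and the support control $\operatorname{Supp} u_\varepsilon \subseteq K'$ is what allows the local integrability hypothesis $(H_1)$ to furnish a global dominant. This is essentially the only subtlety compared to the classical Lebesgue mollification argument.
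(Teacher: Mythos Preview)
The paper does not supply its own proof of this lemma; it is quoted verbatim as \cite[Corollary~2.1]{ayya2020someapp} and used as a black box. Your argument is a correct and self-contained reconstruction: the support claim is standard mollifier algebra, and for the convergence you apply Jensen pointwise in $x$ (using only convexity in $t$), Fubini, the translation continuity of Theorem~\ref{Theo33}, and dominated convergence with the local integrability hypothesis~\eqref{eq1.3} furnishing the majorant $\chi_{K'}\widehat{G}_x(2M)$.

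One small remark: you reduce norm convergence to modular convergence via Proposition~\ref{2prop1}, which requires the $\Delta_2$-condition. That condition is not listed among the hypotheses of the lemma as stated, although in this paper it is a standing assumption through $(g_4)$. If you want the proof to match the stated hypotheses exactly, simply run the same Jensen/domination estimate for $\lambda(u_\varepsilon-u)$ with arbitrary $\lambda>0$; since $u$ is bounded, the dominant becomes $\chi_{K'}\widehat{G}_x(2\lambda M)$, still integrable by~\eqref{eq1.3}, and modular convergence for every $\lambda$ is equivalent to norm convergence without any appeal to $\Delta_2$.
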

\begin{lemma}[{\cite[ Lemma B.1]{ayya2020someapp}}]\label{lemma22}
Let $\widehat{G}_{x}$ be a generalized N-function satisfying \eqref{eq1.3} and the $\Delta_{2}$-condition. Then
$\mathcal{B}_c(\Omega)$ is dense in $(L^{\widehat{G}_{x}}(\Omega), \|u\|_{L^{\widehat{G}_{x}}(\Omega)})$.
\end{lemma}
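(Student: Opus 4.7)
The plan is to approximate a general $u \in L^{\widehat{G}_x}(\Omega)$ by a sequence that is simultaneously truncated (to be bounded in $L^\infty$) and localized (to have compact support inside $\Omega$), and then to convert the modular convergence obtained from the dominated convergence theorem into norm convergence via Proposition~\ref{2prop1}.

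First, I would fix an exhaustion of $\Omega$ by compact sets $K_n$ with $K_n \subset K_{n+1}^\circ$ and $\bigcup_{n} K_n = \Omega$; such an exhaustion exists for every open subset of $\mathbb{R}^N$. Setting $\tau_n(t) := \max\{-n,\min\{t,n\}\}$, define
$$v_n(x) := \tau_n\bigl(u(x)\bigr)\,\chi_{K_n}(x).$$
Each $v_n$ is bounded (by $n$) and supported in the compact set $K_n \subseteq \Omega$, so $v_n \in \mathcal{B}_c(\Omega)$. Pointwise, $\chi_{K_n}(x) \to 1$ for every $x \in \Omega$ because the $K_n$ exhaust $\Omega$, and $\tau_n(u(x)) \to u(x)$ wherever $u$ is finite, so $v_n \to u$ almost everywhere in $\Omega$.

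To promote this to norm convergence, I would use the $\Delta_2$-condition. Since $|\tau_n(u)| \leq |u|$ we have $|v_n - u| \leq 2|u|$, so one application of \eqref{1.6eq2} yields
$$\widehat{G}_x\bigl(|v_n(x) - u(x)|\bigr) \;\leq\; \widehat{G}_x\bigl(2|u(x)|\bigr) \;\leq\; K\,\widehat{G}_x\bigl(|u(x)|\bigr).$$
The right-hand side lies in $L^1(\Omega)$: indeed, under the $\Delta_2$-condition one has $L^{\widehat{G}_x}(\Omega) = K^{\widehat{G}_x}(\Omega)$ (noted in the text just after the definition of the Musielak class), hence $\widehat{G}_x(|u|) \in L^1(\Omega)$. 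The dominated convergence theorem then gives
$$\int_\Omega \widehat{G}_x\bigl(|v_n - u|\bigr)\,dx \longrightarrow 0 \qquad \text{as } n \to \infty,$$
and Proposition~\ref{2prop1} converts this modular convergence into $\|v_n - u\|_{L^{\widehat{G}_x}(\Omega)} \to 0$, as desired.

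There is no real obstacle in this scheme; the only mild technicality is that the support of each approximant must be a \emph{compact subset of} $\Omega$, which is why the cutoff is done against an exhaustion $K_n \Subset \Omega$ rather than, say, against $\Omega \cap B_n$, since the latter need not be relatively compact in $\Omega$ when $\Omega$ is unbounded or has non-smooth boundary.
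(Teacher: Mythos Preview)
The paper does not prove this lemma itself; it is quoted verbatim from \cite[Lemma~B.1]{ayya2020someapp} and stated without argument. So there is no in-paper proof to compare against.

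Your argument is correct and is essentially the standard one. A few minor remarks: your bound $|v_n-u|\le 2|u|$ can in fact be sharpened to $|v_n-u|\le |u|$ (check the two cases $x\in K_n$ and $x\notin K_n$ separately), which would let you dominate directly by $\widehat G_x(|u|)$ without invoking the $\Delta_2$-inequality~\eqref{1.6eq2}; you would still need $\Delta_2$ to ensure $L^{\widehat G_x}(\Omega)=K^{\widehat G_x}(\Omega)$ so that $\widehat G_x(|u|)\in L^1(\Omega)$, and to apply Proposition~\ref{2prop1}. Also, although your proof does not explicitly use the local integrability hypothesis~\eqref{eq1.3}, that hypothesis is what guarantees a priori that $\mathcal B_c(\Omega)\subset L^{\widehat G_x}(\Omega)$, i.e.\ that the statement even makes sense; in your argument this inclusion comes out a posteriori for the particular $v_n$ from the domination estimate.
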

\begin{lemma}\label{lem2.06}  Suppose that $(g_{1})-(g_{4})$ and $(H_{1})-(H_{2})$ hold. Let $u \in W^{s,G_{x,y}}\left(\mathbb{R}^N\right)$. Then $\left\|u-u_{\varepsilon}\right\|_{W^{s,G_{x,y}}\left(\mathbb{R}^N\right)} \rightarrow 0$ as $\varepsilon \rightarrow 0$.
\end{lemma}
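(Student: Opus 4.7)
The plan is to split the norm \eqref{Norm} and handle the two pieces separately.

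For the $L^{\widehat{G}_x}$-piece, a standard three-$\varepsilon$ density argument works: approximate $u$ by some $\phi \in \mathcal{B}_c(\mathbb{R}^N)$ using Lemma \ref{lemma22}, so that $\|u-\phi\|_{L^{\widehat{G}_x}(\mathbb{R}^N)}$ is small; use Lemma \ref{lemmacorollary} to get $\phi_\varepsilon \to \phi$ in $L^{\widehat{G}_x}(\mathbb{R}^N)$; and control the third piece $\|u_\varepsilon - \phi_\varepsilon\|_{L^{\widehat{G}_x}(\mathbb{R}^N)} = \|(u-\phi)\ast J_\varepsilon\|_{L^{\widehat{G}_x}(\mathbb{R}^N)}$ by $\|u-\phi\|_{L^{\widehat{G}_x}(\mathbb{R}^N)}$. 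This last contraction property of the convolution follows from Jensen's inequality applied to the modular $J_{\widehat{G}_x}$, combined with the translation invariance of $\widehat{G}_x$ inherited from $(H_2)$ (which ensures that $\int \widehat{G}_x(|u(\cdot-y)|)\,dx = J_{\widehat{G}_x}(u)$ for every $y$).

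For the Gagliardo seminorm, Proposition \ref{2prop11} reduces the statement to $J_{s,G_{x,y}}(u-u_\varepsilon) \to 0$. Setting $v_\varepsilon := u-u_\varepsilon$ and writing
\begin{equation*}
v_\varepsilon(x) - v_\varepsilon(y) = \int_{B_1} J(z)\bigl[(u(x)-u(y))-(u(x-\varepsilon z)-u(y-\varepsilon z))\bigr]\,dz,
\end{equation*}
Jensen's inequality applied to the probability measure $J(z)\,dz$ and to the convex function $G_{x,y}$ yields the pointwise estimate
\begin{equation*}
G_{x,y}\!\left(D_s v_\varepsilon(x,y)\right)\leq \int_{B_1} J(z)\, G_{x,y}\!\left(\frac{|u(x)-u(y)-u(x-\varepsilon z)+u(y-\varepsilon z)|}{|x-y|^s}\right)dz.
\end{equation*}
Combining this with the triangle inequality, the $\Delta_2$-condition, the translation invariance $(H_2)$, and the translation invariance of $d\mu$ in the diagonal direction, a Fubini argument produces the uniform modular bound $J_{s,G_{x,y}}(v_\varepsilon)\leq C\,J_{s,G_{x,y}}(u)<+\infty$.

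To pass to the limit I would invoke Vitali's convergence theorem. Pointwise a.e.\ convergence $v_\varepsilon \to 0$ holds by Lebesgue differentiation (applicable because $(H_1)$ and the Hölder inequality of the excerpt give $u\in L^1_{loc}(\mathbb{R}^N)$), and Fubini then yields $G_{x,y}(D_s v_\varepsilon(x,y))\to 0$ for $d\mu$-a.e.\ $(x,y)$. The family $\{G_{x,y}(D_s v_\varepsilon)\}_{\varepsilon>0}$ is uniformly integrable and tight in $L^1(d\mu)$: the Jensen bound reduces the integral over any measurable set $E$ (respectively, over the complement of a ball) to an integral of the single function $G_{x,y}(D_s u)\in L^1(d\mu)$ over the translated set $E-(\varepsilon z,\varepsilon z)$, which has the same $d\mu$-measure as $E$ and sits inside a fixed $1$-enlargement of $E$ whenever $|\varepsilon z|<1$, so absolute continuity of a single $L^1$ integral transfers uniformly in $\varepsilon$.

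The main obstacle is precisely that the pointwise Jensen bound produces an $\varepsilon$-dependent dominator, which rules out the classical dominated convergence theorem; the role of $(H_2)$ is exactly to make translations behave as isometries on the modular, and this is what makes the uniform integrability step, hence Vitali, applicable.
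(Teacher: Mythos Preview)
Your $L^{\widehat{G}_x}$ argument coincides with the paper's (though you spell out the convolution contraction more carefully), and your Jensen bound for the seminorm is the same starting point. The divergence is in how one passes to the limit in the seminorm. The paper does \emph{not} use Vitali or any pointwise convergence of $u_\varepsilon$: after Jensen and Fubini it writes
\[
J_{s,G_{x,y}}(u-u_\varepsilon)\le \int_{B_1}\bigl(J(z)^{g^+}+J(z)^{g^-}\bigr)\,F_\varepsilon(z)\,dz,
\qquad
F_\varepsilon(z):=\int_{\mathbb{R}^{2N}} G_{x,y}\bigl(|v-v(\cdot-\varepsilon(z,z))|\bigr)\,d\mu,
\]
with $v=D_s u$. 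It then proves $F_\varepsilon(z)\to 0$ for each fixed $z$ by a three-term argument in $L^{G_{x,y}}(\mathbb{R}^{2N},d\mu)$: approximate $v$ by some $g\in C_0^\infty$ via Proposition~\ref{2prop2}, use $(H_2)$ together with diagonal-translation invariance of $d\mu$ to get $\|T_{\varepsilon(z,z)}(v-g)\|=\|v-g\|$, and use continuity of translation on the smooth $g$. The uniform bound $F_\varepsilon(z)\le 2^{g^+}J_{s,G_{x,y}}(u)$ (again from $(H_2)$) then allows dominated convergence over the \emph{finite}-measure space $B_1$. This is shorter than your route because it trades the infinite-measure Vitali machinery and the pointwise limit of $u_\varepsilon$ for a single DCT in the auxiliary variable $z$.

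Your Vitali route is in principle valid, and your uniform integrability/tightness step is correctly reduced (via diagonal-translation invariance of $d\mu$) to absolute continuity of the single integral $\int G_{x,y}(D_s u)\,d\mu$. But your justification of pointwise convergence has a gap: to invoke Lebesgue differentiation you need $u\in L^1_{\mathrm{loc}}$, and your appeal to H\"older requires $\mathbf{1}_K\in L^{\widetilde{\widehat G}_x}$, i.e.\ local integrability of the \emph{conjugate} $\widetilde{\widehat G}_x$. Hypothesis $(H_1)$ asserts this only for $\widehat G_x$ itself, and since the lemma does not assume $G\in\mathcal{B}_f$, the embedding $L^{\widehat G_x}\hookrightarrow L^1_{\mathrm{loc}}$ is not available in general. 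A fix within your framework: from the first part you already have $\|u_\varepsilon-u\|_{L^{\widehat G_x}}\to 0$, hence $\widehat G_x(|u_\varepsilon-u|)\to 0$ in $L^1(\mathbb{R}^N)$, so along any sequence $\varepsilon_k\to 0$ there is a sub-subsequence with $u_{\varepsilon_{k_j}}\to u$ a.e.\ (using that each $\widehat G_x$ is strictly increasing with $\widehat G_x(0)=0$); run Vitali along that sub-subsequence and conclude for the full family by the sub-subsequence principle.
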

\begin{proof}
 Let $u \in W^{s,G_{x,y}}\left(\mathbb{R}^N\right)$, then $u \in L^{\widehat{G}_{x}}(\mathbb{R}^{N})$.
Thus by Lemma \ref{lemma22}, we can assume that $u$ is bounded and compactly supported in $\mathbb{R}^{N}$. Then by Lemma \ref{lemmacorollary}, we have
\begin{equation}\label{eq2.01111}
\left\|u-u_{\varepsilon}\right\|_{L^{\widehat{G}_{x}}(\mathbb{R}^{N})} \rightarrow 0 \quad \text { as } \quad \varepsilon \rightarrow 0 .
\end{equation}
Thus, by Proposition \ref{2prop11}, it suffices to prove that

\begin{equation}\label{eq2.14}
J_{s,G_{x, y}}(u-u_\varepsilon) \longrightarrow 0, \quad \textsf{ as } \quad \varepsilon \longrightarrow 0.
\end{equation}
 By the definition of $u_{\varepsilon}$, the Jensen’s inequality, Tonelli's and Fubini's theorems, we have
\begin{equation}\label{eq2.0017}
\begin{aligned}
&J_{s,G_{x, y}}(u-u_\varepsilon)\\
& =\int_{\mathbb{R}^{N} \times \mathbb{R}^{N}}G_{x,y}\left(\frac{|u_\varepsilon(x)-u(x)-u_\varepsilon(y)+u(y)|}{|x-y|^{s}}\right)  \frac{d x d y}{|x-y|^{N}}  \\
& =\int_{\mathbb{R}^{N} \times \mathbb{R}^{N}} G_{x,y}\left(\frac{\vert\int_{\mathbb{R}^N}(u(x-z)-u(y-z)) J_\varepsilon(z) d z-u(x)+u(y)\vert}{|x-y|^{s}}\right)\frac{d x d y}{|x-y|^{N}} \\
& =\int_{\mathbb{R}^{N} \times \mathbb{R}^{N}} G_{x,y}\left(\frac{|\int_{B_1}\left(u(x-\varepsilon z)-u(y-\varepsilon z)-u(x)+u(y)\right) J(z) d z|}{|x-y|^{s}}\right)\frac{d x d y}{|x-y|^{N}}\\
&\leq \int_{\mathbb{R}^{N} \times \mathbb{R}^{N}}\left[\int_{B_1}G_{x,y}\left( \frac{|u(x-\varepsilon z)-u(y-\varepsilon z)-u(x)+u(y)| J(z)}{|x-y|^{s}}\right) d z\right] \frac{d x d y}{|x-y|^{N}} \\
& \leq \int_{\mathbb{R}^{N} \times \mathbb{R}^{N} \times B_1}G_{x,y}\left( \frac{|u(x-\varepsilon z)-u(y-\varepsilon z)-u(x)+u(y)|}{|x-y|^{s}}\right)\\
&\times \left(J(z)^{g^{+}}+J(z)^{g^{-}}\right) \frac{d x d y}{|x-y|^{N}} .
\end{aligned}
\end{equation}
We claim that
\begin{equation} \label{eqclaim}
\int_{\mathbb{R}^{N} \times \mathbb{R}^{N}}G_{x,y}\left(\frac{|u(x-\varepsilon z)-u(y-\varepsilon z)-u(x)+u(y)|}{|x-y|^{s}}\right) \frac{d x d y}{|x-y|^{N}}\longrightarrow 0,
\end{equation}
as $\varepsilon \longrightarrow 0$.\\
Fix $z \in B_1$ and put $w:=(z, z) \in \mathbb{R}^{N} \times \mathbb{R}^{N}$. We define the function\\ $v: \mathbb{R}^{N} \times \mathbb{R}^{N} \rightarrow \mathbb{R}$ by
$$
v(x, y):=\frac{(u(x)-u(y))}{|x-y|^{s}}, \quad \forall(x, y) \in \mathbb{R}^{N} \times \mathbb{R}^{N} .
$$
Sine $u \in W^{s,G_{x,y}}\left(\mathbb{R}^N\right)$, then $v \in L^{G_{x,y}}(\mathbb{R}^{N} \times \mathbb{R}^{N},d\mu)$. If $\varepsilon^{\prime}>0$, by Proposition \ref{2prop2}, there exists $\mathrm{g} \in C_0^{\infty}(\mathbb{R}^{N} \times \mathbb{R}^{N})$ with $\displaystyle\|v-g\|_{L^{G_{x,y}}(\mathbb{R}^{N} \times \mathbb{R}^{N},d\mu)}<\frac{\varepsilon^{\prime}}{3}$, then
$$
\begin{aligned}
& \|v(.-\varepsilon w)-v\|_{L^{G_{x,y}}(\mathbb{R}^{N} \times \mathbb{R}^{N},d\mu)} \\
\leq & \|v(.-\varepsilon w)-g(.-\varepsilon w)\|_{L^{G_{x,y}}(\mathbb{R}^{N} \times \mathbb{R}^{N},d\mu)}+\|g(.-\varepsilon w)-g\|_{L^{G_{x,y}}(\mathbb{R}^{N} \times \mathbb{R}^{N},d\mu)}\\
&+\|v-g\|_{L^{G_{x,y}}(\mathbb{R}^{N} \times \mathbb{R}^{N},d\mu)} \\
\leq & \frac{\varepsilon^{\prime}}{3}+\frac{\varepsilon^{\prime}}{3}+\frac{\varepsilon^{\prime}}{3}=\varepsilon^{\prime},
\end{aligned}
$$
with $\varepsilon$ is sufficiently small. This proves our claim \eqref{eqclaim}.\\
Moreover, for a.e. $z \in B_1$, by Lemma \ref{lemmapro} and the convexity of $G_{x,y}$ we have
\begin{equation}\label{eq2.3}
\begin{aligned}
& \left(J(z)^{g^{+}}+J(z)^{g^{-}}\right) \int_{\mathbb{R}^{N} \times \mathbb{R}^{N}}G_{x,y}\left(\frac{|u(x-\varepsilon z)-u(y-\varepsilon z)-u(x)+u(y)|}{|x-y|^{s}}\right) \frac{d x d y}{|x-y|^{N}} \\
& \leq 2^{g^{+}-1}\left(J(z)^{g^{+}}+J(z)^{g^{-}}\right)\left(\int_{\mathbb{R}^{N} \times \mathbb{R}^{N}}G_{x,y}\left(\frac{|u(x-\varepsilon z)-u(y-\varepsilon z)|}{|x-y|^{s}}\right)\frac{d x d y}{|x-y|^{N}}\right. \\
& \left.+\int_{\mathbb{R}^{N} \times \mathbb{R}^{N}}G_{x,y}\left(\frac{|u(x)-u(y)|}{|x-y|^{s}}\right) \frac{d x d y}{|x-y|^{N}}\right) \\
& \leq 2^{g^{+}}\left(J(z)^{g^{+}}+J(z)^{g^{-}}\right) \int_{\mathbb{R}^{N} \times \mathbb{R}^{N}}G_{x,y}\left(\frac{|u(x)-u(y)|}{|x-y|^{s}}\right) \frac{d x d y}{|x-y|^{N}} \in L^{1}\left(B_1\right),
\end{aligned}
\end{equation}
for any $\varepsilon>0$. Then by this, \eqref{eqclaim} and the dominated convergence theorem, we have
\begin{equation}\label{limm2}
\begin{aligned}
&\int_{B_1} \int_{\mathbb{R}^{N} \times \mathbb{R}^{N}}G_{x,y}\left(\frac{|u(x-\varepsilon z)-u(y-\varepsilon z)-u(x)+u(y)|}{|x-y|^{s}}\right)\\&\left(J(z)^{g^{+}}+J(z)^{g^{-}}\right) \frac{d x d y}{|x-y|^{N}} d z \longrightarrow 0, \textsl{ as } \varepsilon\longrightarrow 0.
\end{aligned}
\end{equation}
Then by \eqref{limm2} and \eqref{eq2.0017}, we get \eqref{eq2.14}. This concludes the proof.
\end{proof}

\begin{lemma}\label{lem2.066} Assume that $(g_{1})-(g_{4})$ and $(H_{1})$ hold and let $u \in W^{s,G_{x,y}}\left(\mathbb{R}^N\right)$. Then $\left\|T_h u-u\right\|_{W^{s,G_{x,y}}\left(\mathbb{R}^N\right)} \rightarrow 0$ as $|h| \rightarrow 0$.
\end{lemma}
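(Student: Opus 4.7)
The plan is to decompose the norm $\|T_h u - u\|_{W^{s,G_{x,y}}(\mathbb{R}^N)}$ into its $L^{\widehat{G}_x}$ piece and its $(s,G_{x,y})$-Gagliardo seminorm piece and control them separately. The key global observation, carried over implicitly from the preceding Lemma \ref{lem2.06}, is that the translation invariance $(H_2)$ forces $\widehat{G}_{x-h} = \widehat{G}_x$ and $G_{x+h,y+h} = G_{x,y}$, so that translations act as isometries on $L^{\widehat{G}_x}(\mathbb{R}^N)$ and, since $d\mu$ is itself invariant under the simultaneous shift $(h,h)$, on $L^{G_{x,y}}(\mathbb{R}^{2N}, d\mu)$ as well.

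For the $L^{\widehat{G}_x}$ piece I would run a standard three-$\eta$ argument. Given $\eta > 0$, Lemma \ref{lemma22} furnishes $\phi \in \mathcal{B}_c(\mathbb{R}^N)$ with $\|u - \phi\|_{L^{\widehat{G}_x}} < \eta/3$; translation-isometry yields $\|T_h u - T_h \phi\|_{L^{\widehat{G}_x}} = \|u - \phi\|_{L^{\widehat{G}_x}} < \eta/3$; and Theorem \ref{Theo33}, applied to the bounded compactly supported $\phi$, gives $\|T_h \phi - \phi\|_{L^{\widehat{G}_x}} < \eta/3$ once $|h|$ is small enough. The triangle inequality then delivers $\|T_h u - u\|_{L^{\widehat{G}_x}(\mathbb{R}^N)} < \eta$.

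For the Gagliardo piece, by Proposition \ref{2prop11} it is enough to show $J_{s,G_{x,y}}(T_h u - u) \to 0$ as $|h| \to 0$. Setting $v(x,y) := (u(x) - u(y))/|x-y|^s$, the identity $|(x+h)-(y+h)| = |x-y|$ rewrites the modular as
$$J_{s,G_{x,y}}(T_h u - u) = \int_{\mathbb{R}^{2N}} G_{x,y}\bigl(|v(x+h, y+h) - v(x,y)|\bigr)\, d\mu,$$
and the hypothesis $u \in W^{s,G_{x,y}}(\mathbb{R}^N)$ ensures $v \in L^{G_{x,y}}(\mathbb{R}^{2N}, d\mu)$. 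The task thus reduces to continuity at zero of the shift $w \mapsto v(\,\cdot\, - w)$ with $w = (h,h)$ inside this Musielak--Orlicz space, which I would establish exactly as in the proof of Lemma \ref{lem2.06}: approximate $v$ in norm by some $g \in C_0^\infty(\mathbb{R}^{2N})$ (Proposition \ref{2prop2} adapted to $L^{G_{x,y}}(\mathbb{R}^{2N}, d\mu)$), use uniform continuity of $g$ together with the $\Delta_2$-condition to conclude $\|g(\,\cdot\, - w) - g\|_{L^{G_{x,y}}} \to 0$, and close by the triangle inequality using the isometric action of translations. Summing the two pieces completes the proof.

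The main obstacle is precisely this last translation-continuity step in the weighted space $L^{G_{x,y}}(\mathbb{R}^{2N}, d\mu)$: it rests crucially on the spatial invariance $(H_2)$, without which the shift would distort the $x$-dependence of $G_{x,y}$ and destroy the isometry used to compare $v$ with its smooth approximant $g$ after translation.
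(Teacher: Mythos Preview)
Your argument is correct but takes a different route from the paper's. The paper first reduces to $u\in C_0^\infty(\mathbb{R}^N)$ (citing Lemmas~\ref{lemmacorollary} and~\ref{lemma22}) and then, for the Gagliardo modular, applies dominated convergence directly: with $u$ smooth one has the uniform bound
\[
G_{x,y}\!\left(\frac{|u(x+h)-u(y+h)-u(x)+u(y)|}{|x-y|^{s}}\right)\;\le\;G_{x,y}\!\left(\frac{2\beta\,\delta(x,y)}{|x-y|^{s}}\right)\in L^{1}(\mathbb{R}^{2N},d\mu),
\]
with $\beta=2\|u\|_{C^{1}}$ and $\delta(x,y)=\min\{1,|x-y|\}$ as in Lemma~\ref{lemreg}, while the integrand tends to zero pointwise by continuity of $u$ and of $t\mapsto G_{x,y}(t)$. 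No three-$\eta$ splitting is used for the seminorm piece.

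Your route instead keeps $u$ general, treats the $L^{\widehat G_x}$ piece by a three-term estimate through $\mathcal B_c$, and recasts the Gagliardo piece as translation-continuity of $v\in L^{G_{x,y}}(\mathbb{R}^{2N},d\mu)$, which is exactly the claim~\eqref{eqclaim} already established inside the proof of Lemma~\ref{lem2.06}. This is economical (it recycles an existing step and avoids needing density of $C_0^\infty$ in the full $W^{s,G_{x,y}}$ norm), whereas the paper's dominated-convergence argument is more self-contained once the reduction to smooth $u$ is granted. You are right that both approaches hinge on $(H_2)$ for the translation isometries; the paper's reduction step needs it implicitly even though the lemma statement lists only $(H_1)$, so your explicit reliance on $(H_2)$ is not an additional assumption in practice.
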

\begin{proof}
 Let $u \in W^{s,G_{x,y}}\left(\mathbb{R}^N\right)$, then $u \in L^{\widehat{G}_{x}}(\mathbb{R}^{N})$.
Thus according to Lemma \ref{lemmacorollary} and Lemma \ref{lemma22}, we can assume that $u\in C^{\infty}_{0}(\mathbb{R}^{N})$. So that by Theorem \ref{Theo33}, we have
\begin{equation}\label{eq2.01}
\left\|T_h u-u\right\|_{L^{\widehat{G}_{x}}(\mathbb{R}^{N})} \rightarrow 0 \quad \text { as } \quad |h|\rightarrow 0 .
\end{equation}
Thus, by Proposition \ref{2prop11}, it suffices to prove that 
\begin{equation}\label{eq2.1}
\displaystyle J_{s,G_{x, y}}(T_{h} u-u) \longrightarrow 0 \quad\quad \textsl{ as } \quad |h|\longrightarrow 0 .
\end{equation}
 We have
\begin{equation}\label{eq2.007}
\begin{aligned}
& J_{s,G_{x, y}}(T_{h} u-u)\\&=\int_{\mathbb{R}^{N} \times \mathbb{R}^{N}}G_{x,y}\left(\frac{|T_{h} u(x)-u(x)- T_{h}u(y)+u(y)|}{|x-y|^{s}}\right) \frac{dx dy}{|x-y|^{N}}  \\
& =\int_{\mathbb{R}^{N} \times \mathbb{R}^{N}}G_{x,y}\left(\frac{|(u(x+h)- u(y+h))-(u(x)-u(y))|}{|x-y|^{s}}\right) \frac{dx dy}{|x-y|^{N}}.
\end{aligned}
\end{equation}
Since $u\in C^{\infty}_{0}(\mathbb{R}^{N})$, then by the same way in the proof of Lemma  \ref{lemreg} we have 
\begin{equation}\label{ineqnew}
\begin{aligned}
&G_{x,y}\left(\frac{|(u(x+h)- u(y+h))-(u(x)-u(y))|}{|x-y|^{s}}\right) \\
&\leq G_{x,y}\left(\frac{2\beta \delta(x,y)}{|x-y|^{s}}\right)\in L^{1}(\mathbb{R}^{N} \times \mathbb{R}^{N},d\mu),
\end{aligned}
\end{equation}
where $\beta=2|| u \|_{C^1(\Omega)}$ and $\delta(x, y)=\min \{1,|x-y|\}$. \\
 Then combining \eqref{ineqnew}, \eqref{eq2.007} and the fact that \\
 $$G_{x,y}\left(\frac{|(u(x+h)- u(y+h))-(u(x)-u(y))|}{|x-y|^{s}}\right)\longrightarrow 0\quad \textsl{ as }\quad |h|\longrightarrow 0$$
which is given by the continuity of the functions $u$ and $t\rightarrow G_{x,y}(t)$, together with the dominated convergence Theorem, we have 
 $$\displaystyle J_{s,G_{x, y}}(T_{h} u-u) \longrightarrow 0 \quad\quad \textsl{ as } \quad |h|\longrightarrow 0 .$$
Then the result follows.
\end{proof}

Now, we will discuss the cut-off technique needed for the density argument. For any $j \in \mathbb{N}$, let $\tau_j \in C^{\infty}\left(\mathbb{R}^N\right)$ be such that
\begin{equation}\label{eqcut}
\begin{gathered}
0 \leq \tau_j(x) \leq 1, \quad \forall x \in \mathbb{R}^N, \\
\tau_j(x)= \begin{cases}1 & \text { if } x \in B_j, \\
0 & \text { if } x \in \mathbb{R}^N \backslash B_{j+1},\end{cases}
\end{gathered}
\end{equation}
and
$$ |\nabla\tau_j(x)|\leqslant C \quad \forall x\in \mathbb{R}^{N},$$
where $C$ is a positive constant not depending on $j$, and $B_j$ denotes the ball centered at $0$ with radius $j$.\\

We have the following result.
\begin{lemma}\label{2lem4} Assume that $(g_{1})-(g_{4})$ hold and  $G_{x,y}\in \mathcal{B}_{f}$. Let $u \in W^{s,G_{x,y}}\left(\mathbb{R}^N\right)$. Then $\tau_j u \in W^{s,G_{x,y}}\left(\mathbb{R}^N\right)$.
\end{lemma}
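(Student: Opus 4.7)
The $L^{\widehat{G}_{x}}$-membership of $\tau_{j}u$ is immediate: since $|\tau_{j}u|\le|u|$ pointwise and $\widehat{G}_{x}$ is non-decreasing, monotonicity gives $J_{\widehat{G}_{x}}(\tau_{j}u)\le J_{\widehat{G}_{x}}(u)<\infty$. Consequently, the substance of the lemma is showing that the Gagliardo-type modular $J_{s,G_{x,y}}(\tau_{j}u)$ is finite.

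I would start from the product-rule identity
\[\tau_{j}(x)u(x)-\tau_{j}(y)u(y)=\tau_{j}(x)(u(x)-u(y))+u(y)(\tau_{j}(x)-\tau_{j}(y)).\]
Combining $|\tau_{j}|\le 1$, the convexity of $G_{x,y}$, and the $\Delta_{2}$-condition yields the pointwise estimate
\[G_{x,y}\!\left(\tfrac{|\tau_{j}u(x)-\tau_{j}u(y)|}{|x-y|^{s}}\right)\le \tfrac{K}{2}\,G_{x,y}\!\left(\tfrac{|u(x)-u(y)|}{|x-y|^{s}}\right)+\tfrac{K}{2}\,G_{x,y}\!\left(\tfrac{|u(y)||\tau_{j}(x)-\tau_{j}(y)|}{|x-y|^{s}}\right).\]
Integrating against $d\mu$, the first term contributes at most $\tfrac{K}{2}J_{s,G_{x,y}}(u)<\infty$, so everything reduces to showing finiteness of the commutator term $T:=\int\!\!\int_{\mathbb{R}^{2N}} G_{x,y}\bigl(|u(y)||\tau_{j}(x)-\tau_{j}(y)|/|x-y|^{s}\bigr)\,d\mu$.

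To estimate $T$ I split the $(x,y)$-integration according to whether $|x-y|\le 1$ or $|x-y|>1$. On the near-diagonal piece, the Lipschitz bound $|\tau_{j}(x)-\tau_{j}(y)|\le C|x-y|$ (supplied by the uniform gradient estimate) makes the argument of $G_{x,y}$ bounded by $C|x-y|^{1-s}|u(y)|$; on the far piece the trivial bound $|\tau_{j}(x)-\tau_{j}(y)|\le 2$ makes it bounded by $2|u(y)|/|x-y|^{s}$. In each regime Lemma~\ref{lemmapro} together with the uniform bound $G_{x,y}(1)\le C_{2}$ from $\mathcal{B}_{f}$ allows me to extract a genuine power of $|x-y|$: the near piece gives integrands of the form $|x-y|^{(1-s)g^{\pm}-N}$, whose exponent exceeds $-N$ and is thus integrable near the origin; the far piece gives $|x-y|^{-sg^{\pm}-N}$, whose exponent is strictly less than $-N$ and is thus integrable at infinity. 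Hypothesis $(H_{2})$ reduces $G_{x,y}(t)$ to a function $\Gamma(y-x,t)$ of the difference alone, so after the substitution $w=x-y$ the inner $x$-integrals become $y$-independent finite constants.

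The main obstacle is the closing step: the resulting estimate has the shape $T\le C_{j}\int_{\mathbb{R}^{N}}\bigl(|u(y)|^{g^{+}}+|u(y)|^{g^{-}}\bigr)dy$, and neither $|u|^{g^{+}}$ nor $|u|^{g^{-}}$ is automatically in $L^{1}(\mathbb{R}^{N})$ from the bare hypothesis $u\in L^{\widehat{G}_{x}}(\mathbb{R}^{N})$. To bridge this gap one refines the argument in two ways: (i) exploit that the integrand of $T$ is supported on $\{(x,y):\tau_{j}(x)\neq\tau_{j}(y)\}$, so that the compact support of $\nabla\tau_{j}$ in the annulus $B_{j+1}\setminus B_{j}$ together with the "straddling" region between $B_{j}$ and $\mathbb{R}^{N}\setminus B_{j+1}$ effectively localises one of the two variables to a bounded set; (ii) on the remaining $y$-integration, split into $\{|u|\le 1\}$ and $\{|u|>1\}$ and invoke the sandwich $C_{1}\min(t^{g^{+}},t^{g^{-}})\le\widehat{G}(t)\le C_{2}\max(t^{g^{+}},t^{g^{-}})$ from Lemma~\ref{lemmapro} and $\mathcal{B}_{f}$, so that the "favorable" power ($|u|^{g^{+}}$ on $\{|u|\le 1\}$ and $|u|^{g^{-}}$ on $\{|u|>1\}$) is dominated by $\widehat{G}(|u|)/C_{1}$ and hence by the finite quantity $J_{\widehat{G}_{x}}(u)$, while the remaining contributions are absorbed by the extra $|x-y|$-decay that was retained in step (i).
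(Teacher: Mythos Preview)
Your decomposition coincides with the paper's up to the commutator term $T$, but the treatment of $T$ diverges and contains a genuine gap. The paper does not attempt any direct estimate of $T$: instead it invokes Lemma~\ref{2lem2} to pass to the case $u\in L^{\infty}(\mathbb{R}^{N})$, pulls the bounded factor $|u(y)|$ outside via Lemma~\ref{lemmapro} as a harmless constant depending on $\|u\|_{L^{\infty}}$, and recognises the remaining integral as the Gagliardo modular of $\tau_{j}$, which is finite by Lemma~\ref{lemreg} because $\tau_{j}\in C_{0}^{\infty}(\mathbb{R}^{N})$. That is the entire argument for $T$ in the paper.

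The gap in your route is the following. When you bound $G_{x,y}(A)\le C_{2}\max\{A^{g^{+}},A^{g^{-}}\}$ (Lemma~\ref{lemmapro} plus $\mathcal{B}_{f}$) with $A$ containing the factor $|u(y)|$, the exponent that survives is always the \emph{unfavourable} one: on $\{|u|>1\}$ the contribution is $|u(y)|^{g^{+}}$. Your fix~(ii) only dominates the favourable exponent $|u|^{g^{-}}$ on that set by $\widehat{G}_{x}(|u|)/C_{1}$; the term $\int_{\{|u|>1\}}|u(y)|^{g^{+}}\,dy$ is \emph{not} controlled by $J_{\widehat{G}_{x}}(u)$. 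Fix~(i) does not rescue this either: the support structure of $\tau_{j}$ can at best confine $y$ to a bounded set, and restricting to a bounded set gives no integrability for an unbounded $|u|^{g^{+}}$. The appeal to ``extra $|x-y|$-decay retained in step~(i)'' cannot close the estimate, since all of the available $|x-y|$-integrability is already consumed in integrating out the second variable. The short path is the paper's: reduce to bounded $u$ first, then apply Lemma~\ref{lemreg} directly to $\tau_{j}$.
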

\begin{proof}
Let $u \in W^{s,G_{x,y}}\left(\mathbb{R}^N\right)$. Since $\left|\tau_j\right| \leq 1$ then $\tau_j u \in L^{\widehat{G}_{x}}\left(\mathbb{R}^N\right)$.\\ Furthermore, for some $\lambda>0$, we have
$$
\begin{aligned}
&\displaystyle J_{s,G_{x, y}}\left(\lambda\tau_ju\right)\\ 
& =\int_{\mathbb{R}^N} \int_{\mathbb{R}^N} G_{x,y}\left(\frac{\lambda\left|\tau_j(x) u(x)-\tau_j(y) u(y)\right|}{|x-y|^{s}}\right) \frac{d x d y}{|x-y|^{N}} \\
&\leq  2^{g^{+}-1} \int_{\mathbb{R}^N} \int_{\mathbb{R}^N} G_{x,y}\left(\frac{\lambda\left|\tau_j(x)(u(x)-u(y))\right|}{|x-y|^{s}}\right)  \frac{d x d y}{|x-y|^{N}} \\
& +2^{g^{+}-1} \int_{\mathbb{R}^N} \int_{\mathbb{R}^N} G_{x,y}\left(\frac{\lambda\left|u(y)\left(\tau_j(x)-\tau_j(y)\right)\right|}{|x-y|^{s}}\right)\frac{d x d y}{|x-y|^{N}}\\
&\leq  2^{g^{+}-1} \int_{\mathbb{R}^N} \int_{\mathbb{R}^N} G_{x,y}\left(\frac{\lambda|u(x)-u(y)|}{|x-y|^{s}}\right)\frac{d x d y}{|x-y|^{N}}\\
& +2^{g^{+}-1} \int_{\mathbb{R}^N} \int_{\mathbb{R}^N} G_{x,y}\left(\frac{\lambda\left|u(y)\left(\tau_j(x)-\tau_j(y)\right)\right|}{|x-y|^{s}}\right)\frac{d x d y}{|x-y|^{N}}.
\end{aligned}
$$
where
$$
\int_{\mathbb{R}^N} \int_{\mathbb{R}^N} G_{x,y}\left(\frac{\lambda|u(x)-u(y)|}{|x-y|^{s}}\right)\frac{d x d y}{|x-y|^{N}}<\infty
$$
 since $u \in W^{s,G_{x,y}}\left(\mathbb{R}^N\right)$.\\
By Lemma \ref{2lem2}, we can assume that $u \in L^{\infty}\left(\mathbb{R}^N\right)$. Therefore, by Lemma \ref{lemmapro} we have
\begin{equation}\label{ieqnew}
\begin{aligned}
&\int_{\mathbb{R}^N} \int_{\mathbb{R}^N} G_{x,y}\left(\frac{\lambda\left|u(y)\left(\tau_j(x)-\tau_j(y)\right)\right|}{|x-y|^{s}}\right)\frac{dx dy}{|x-y|^{N}} \\
&\leq C \int_{\mathbb{R}^N} \int_{\mathbb{R}^N} G_{x,y}\left(\frac{\left|\tau_j(x)-\tau_j(y)\right|}{|x-y|^{s}}\right)\frac{d x d y}{|x-y|^{N}},
\end{aligned}
\end{equation}
where the constant $C$ depends on $g^{+}$, $g^{-}$, $\lambda$ and $\|u\|_{L^{\infty}\left(\mathbb{R}^N\right)}$. Finally, since $ \tau_{j}\in C^{\infty}_{0}(\mathbb{R}^{N})$, then by Lemma \ref{lemreg} and the inequality \eqref{ieqnew}, we get
$$
\int_{\mathbb{R}^N} \int_{\mathbb{R}^N} G_{x,y}\left(\frac{\lambda\left|u(y)\left(\tau_j(x)-\tau_j(y)\right)\right|}{|x-y|^{s}}\right) \frac{d x d y}{|x-y|^{N}} <+\infty .
$$
This concludes the proof.
\end{proof}

\begin{lemma}\label{lem2.8}
Assume that $(g_{1})-(g_{4})$ hold and $G_{x,y}\in \mathcal{B}_{f}$. Let $u \in W^{s,G_{x,y}}\left(\mathbb{R}^N\right)$. Then  $\operatorname{Supp}\left(\tau_j u\right) \subseteq \overline{B}_{j+1} \cap \operatorname{Supp} u$, and
$$
\left\|\tau_j u-u\right\|_{W^{s,G_{x,y}}\left(\mathbb{R}^N\right)} \longrightarrow 0 \quad \text { as } j \longrightarrow+\infty .
$$
\end{lemma}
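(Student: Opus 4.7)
My plan is to split the statement into the support inclusion and the convergence in the two pieces of the norm, namely the $L^{\widehat{G}_x}$ part and the $(s, G_{x,y})$-Gagliardo seminorm. The support inclusion is immediate, since $\tau_j \equiv 0$ on $\mathbb{R}^N \setminus B_{j+1}$ gives $\operatorname{Supp}(\tau_j u) \subseteq \operatorname{Supp}(\tau_j) \cap \operatorname{Supp} u \subseteq \overline{B}_{j+1} \cap \operatorname{Supp} u$. For the $L^{\widehat{G}_x}$-convergence, I would observe that $(\tau_j u - u)(x) = (\tau_j(x) - 1)u(x) \to 0$ pointwise with $|(\tau_j - 1)u| \leq |u|$; since $\widehat{G}_x(|u|) \in L^1(\mathbb{R}^N)$ and $\widehat{G}_x$ satisfies the $\Delta_2$-condition, dominated convergence applied to the modular together with Proposition \ref{2prop1} gives $\|\tau_j u - u\|_{L^{\widehat{G}_x}(\mathbb{R}^N)} \to 0$.

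For the Gagliardo seminorm, by Proposition \ref{2prop11} it suffices to show $J_{s, G_{x,y}}(\tau_j u - u) \to 0$. I would use the decomposition
$$(\tau_j u - u)(x) - (\tau_j u - u)(y) = (\tau_j(x) - 1)\bigl(u(x) - u(y)\bigr) + u(y)\bigl(\tau_j(x) - \tau_j(y)\bigr),$$
and, thanks to the convexity of $G_{x,y}$ together with the $\Delta_2$-type inequality of Lemma \ref{lemmapro}, bound
$$J_{s, G_{x,y}}(\tau_j u - u) \leq 2^{g^+ - 1}\bigl(I_1(j) + I_2(j)\bigr),$$
where $I_1(j)$ and $I_2(j)$ are the integrals of $G_{x,y}$ applied to the two pieces of the decomposition, divided by $|x-y|^N$. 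The first integral $I_1(j)$ is the easy one: because $|\tau_j(x) - 1| \leq 1$ and $G_{x,y}$ is monotone, its integrand is dominated by $G_{x,y}(|u(x)-u(y)|/|x-y|^s)/|x-y|^N$, which is integrable since $u \in W^{s, G_{x,y}}(\mathbb{R}^N)$, and it converges pointwise to zero as $\tau_j(x) \to 1$; dominated convergence yields $I_1(j) \to 0$.

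The main obstacle lies in $I_2(j)$. My plan is to first invoke Lemma \ref{2lem2} to reduce to the case $u \in L^{\infty}(\mathbb{R}^N)$, so that $|u(y)| \leq M$. Combining the uniform estimates $|\nabla \tau_j| \leq C$ and $|\tau_j| \leq 1$ gives the $j$-uniform bound $|\tau_j(x) - \tau_j(y)| \leq \beta\,\delta(x, y)$ with $\delta(x, y) = \min\{1, |x-y|\}$ and $\beta$ independent of $j$. Consequently the integrand of $I_2(j)$ is dominated, uniformly in $j$, by $G_{x,y}(M\beta\,\delta(x,y)/|x-y|^s)/|x-y|^N$; its integrability follows by splitting into the regions $|x-y| \leq 1$ and $|x-y| > 1$, extracting powers of $|x-y|$ via Lemma \ref{lemmapro}, and using the global bound $G_{x,y}(1) \leq C_2$ coming from $G_{x,y} \in \mathcal{B}_f$ — essentially the same computation that appears in the proof of Lemma \ref{lemreg}. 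Since $\tau_j(x) - \tau_j(y) \to 0$ pointwise as $j \to \infty$, dominated convergence gives $I_2(j) \to 0$, and combining with $I_1(j) \to 0$ concludes the proof. The most delicate point is producing this $j$-uniform integrable majorant for $I_2(j)$; once that is in place, the pointwise convergence argument proceeds routinely.
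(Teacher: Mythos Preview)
Your strategy coincides with the paper's: the same decomposition of the Gagliardo difference into the $(\tau_j(x)-1)(u(x)-u(y))$ piece and the $u(y)(\tau_j(x)-\tau_j(y))$ piece, the same reduction to $u\in L^\infty$ via Lemma~\ref{2lem2}, and dominated convergence for each term. You are actually more careful than the paper in one respect: for $I_2(j)$ you seek a $j$-uniform majorant through $|\nabla\tau_j|\le C$, whereas the paper merely invokes Lemma~\ref{lemreg} for each fixed $\tau_j$, producing a $j$-dependent bound that does not, strictly speaking, license dominated convergence as $j\to\infty$.

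There is, however, a genuine gap in your treatment of $I_2(j)$: the proposed majorant
\[
(x,y)\longmapsto G_{x,y}\bigl(M\beta\,\delta(x,y)/|x-y|^{s}\bigr)\,|x-y|^{-N}
\]
is \emph{not} integrable over $\mathbb{R}^N\times\mathbb{R}^N$. The computation you borrow from Lemma~\ref{lemreg} shows only that for each fixed $y$ the $x$-integral is a finite constant (independent of $y$, thanks to $\mathcal{B}_f$); integrating that constant over $y\in\mathbb{R}^N$ then gives $+\infty$. In Lemma~\ref{lemreg} the double integral is genuinely finite because the $C_0^\infty$ function there has \emph{compact support}, which confines one of the two integrations to a bounded set---precisely the feature that disappears once you replace $|u(y)|$ by the constant $M$. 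The paper's proof shares this defect in a different guise (its $j$-dependent bound has support $\overline{B}_{j+1}$, which exhausts $\mathbb{R}^N$). A correct argument for $I_2(j)$ must retain some $y$-integrability inherited from $u$ itself---for instance by keeping a factor controllable by $\widehat{G}_y(|u(y)|)\in L^1(\mathbb{R}^N)$---rather than discarding it via $|u(y)|\le M$; neither your outline nor the paper's proof supplies this step.
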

\begin{proof}
By \eqref{eqcut} and \cite[Lemma~9 ]{fiscella2015density}, we get
$$
\operatorname{Supp}\left(\tau_j u\right) \subseteq \overline{B}_{j+1} \cap \operatorname{Supp} u .
$$
Now, let us prove that
$$
\left\|\tau_j u-u\right\|_{W^{s,G_{x,y}}\left(\mathbb{R}^N\right)} \longrightarrow 0 \text { as } j \longrightarrow+\infty .
$$
From Proposition \ref{2prop1} and Proposition \ref{2prop11}, it suffices to prove that
$$
\int_{\mathbb{R}^N}\widehat{G}_{x}\left(\left|\tau_j(x) u(x)-u(x)\right|\right) d x \longrightarrow 0 \quad \text { as } j \longrightarrow+\infty
$$
and
$$
\displaystyle J_{s,G_{x, y}}(\tau_{j} u-u) \longrightarrow 0 \quad\quad \textsl{ as } \quad j\longrightarrow +\infty.
$$
Since $u\in L^{\widehat{G}_{x}}(\mathbb{R}^{N})$, we have
$$
\begin{aligned}
\widehat{G}_{x}\left(\left|\tau_j(x) u(x)-u(x)\right|\right) & \leq\widehat{G}_{x}\left(2|u(x)|\right) \\
& \leq 2^{g^{+}}\widehat{G}_{x}\left(|u(x)|\right)\in L^1\left(\mathbb{R}^N\right) .
\end{aligned}
$$
Moreover, by \eqref{eqcut} we have
$$
\widehat{G}_{x}\left(\left|\tau_j(x) u(x)-u(x)\right|\right) \longrightarrow 0 \quad \text { as } j \longrightarrow+\infty \text { a.e. in } \mathbb{R}^N.
$$
Then, by using the dominated convergence theorem, we get
$$
\int_{\mathbb{R}^N}\widehat{G}_{x}\left(\left|\tau_j(x) u(x)-u(x)\right|\right) \longrightarrow 0 \quad \text { as } j \longrightarrow+\infty .
$$
Now, let us show that
$$
\displaystyle J_{s,G_{x, y}}(\tau_{j} u-u) \longrightarrow 0 \quad\quad \textsl{ as } \quad j\longrightarrow +\infty.
$$
 We set $\eta_j=1-\tau_j$. Then $\eta_j u=u-\tau_j u$. Furthermore, we have
$$
\begin{aligned}
& \left|\tau_j(x) u(x)-u(x)-\tau_j(y) u(y)+u(y)\right| \\
& \quad=\left|\eta_j(x)(u(x)-u(y))-\left(\tau_j(y)-\tau_j(x)\right) u(y)\right| .
\end{aligned}
$$
Therefore, we have
\begin{equation}\label{inequation1}
\begin{aligned}
&J_{s,G_{x, y}}(\tau_{j} u-u)\\
&=\int_{\mathbb{R}^N} \int_{\mathbb{R}^N} G_{x,y}\left(\frac{\left|\tau_j(x) u(x)-u(x)-\tau_j(y) u(y)+u(y)\right|}{|x-y|^{s}}\right)\frac{d x d y}{|x-y|^{N}} \\
& \leq 2^{g^{+}-1} \int_{\mathbb{R}^N} \int_{\mathbb{R}^N} G_{x,y}\left(\frac{\left|\tau_j(x)-\tau_j(y)\right||u(y)|}{|x-y|^{s}}\right)\frac{d x d y}{|x-y|^{N}} \\
& \quad+2^{g^{+}-1} \int_{\mathbb{R}^N} \int_{\mathbb{R}^N} G_{x,y}\left(\frac{|u(x)-u(y)| \eta_j(x)}{|x-y|^{s}}\right) \frac{d x d y}{|x-y|^{N}} .
\end{aligned}
\end{equation}
According to Lemma \ref{2lem2}, we can suppose that $u \in L^{\infty}\left(\mathbb{R}^N\right)$. Therefore,
$$
\begin{aligned}
&G_{x,y}\left(\frac{\left|\tau_j(x)-\tau_j(y)\right||u(y)|}{|x-y|^{s}}\right)\\ &\leq C\left(\|u\|_{L^{\infty}\left(\mathbb{R}^N\right)}, g^{+}, g^{-}\right) G_{x,y}\left(\frac{\left|\tau_j(x)-\tau_j(y)\right|}{|x-y|^{s}}\right).
\end{aligned}
$$
By Lemma \ref{lemreg}, we have
$$
G_{x,y}\left(\frac{\left|\tau_j(x)-\tau_j(y)\right|}{|x-y|^{s}}\right) \in L^1\left(\mathbb{R}^N \times \mathbb{R}^N,d\mu\right) .
$$
Moreover, by \eqref{eqcut} we have
$$
G_{x,y}\left(\frac{\left|\tau_j(x)-\tau_j(y)\right||u(y)|}{|x-y|^{s}}\right)  \longrightarrow 0 \quad \text { as } j \longrightarrow \infty \text { a.e. in } \mathbb{R}^N \times \mathbb{R}^N.
$$
Hence, by using the dominated convergence theorem, we get
\begin{equation}\label{lim12}
\int_{\mathbb{R}^N} \int_{\mathbb{R}^N} G_{x,y}\left(\frac{\left|\tau_j(x)-\tau_j(y)\right||u(y)|}{|x-y|^{s}}\right)\frac{d x d y}{|x-y|^{N}} \longrightarrow 0 \quad \text { as } j \longrightarrow \infty .
\end{equation}
Also, by Lemma \ref{lemmapro}, we have
$$
G_{x,y}\left(\frac{|u(x)-u(y)|}{|x-y|^{s}}\eta_j(x)\right)\ \leq \eta_j(x)^{g^{-}} G_{x,y}\left(\frac{|u(x)-u(y)|}{|x-y|^{s}}\right),
$$
and since $u\in W^{s,G_{x,y}}\left(\mathbb{R}^N\right)$, then
$$
G_{x,y}\left(\frac{|u(x)-u(y)|}{|x-y|^{s}}\right) \in L^1\left(\mathbb{R}^N \times \mathbb{R}^N,d\mu\right).
$$
Again by \eqref{eqcut}, we have
$$
G_{x,y}\left(\frac{|u(x)-u(y)|}{|x-y|^{s}}\eta_j(x)\right) \longrightarrow 0 \quad \text { as } j \longrightarrow \infty \text { a.e. in } \mathbb{R}^N \times \mathbb{R}^N .
$$
Hence, by the dominated convergence theorem, we have
$$
\int_{\mathbb{R}^N} \int_{\mathbb{R}^N} G_{x,y}\left(\frac{|u(x)-u(y)|}{|x-y|^{s}}\eta_j(x)\right)\frac{d x d y}{|x-y|^{N}} \longrightarrow 0 \quad \text { as } j \longrightarrow \infty .
$$
Then by this, \eqref{lim12} and \eqref{inequation1} we get $$
\displaystyle J_{s,G_{x, y}}(\tau_{j} u-u) \longrightarrow 0 \quad\quad \textsl{ as } \quad j\longrightarrow +\infty.
$$
The proof is complete.
\end{proof}

For any $\delta > 0$ and any function $u$ we define the function $u_{\delta}$ by
$$u_{\delta}:=T_{(\bar{0},\delta)}u,$$
where $\bar{0}=(0,0,0,...,0)\in \mathbb{R}^{N-1}$.
\begin{lemma}[{\cite[Lemma 14]{fiscella2015density}}]\label{lemmad}
 Let $\Omega$ be a hypograph. Let $u: \mathbb{R}^N \rightarrow \mathbb{R}$ be such that $ u=0$ in $\mathbb{R}^{N}\setminus \Omega$.
Then,
\begin{equation}\label{eeeq1}
\text { Supp } u_\delta \subseteq \Omega \text {. }
\end{equation}
More precisely, given any $R>0$ there exists $a>0$ such that
\begin{equation}\label{eeeq2}
B_R \cap\left(\operatorname{Supp} u_\delta+B_a\right) \subseteq \Omega .
\end{equation}
The above quantity $a$ only depends on $N, u, \delta, R$ and $\Omega$ (say, $a=a(N, u, \delta, R, \Omega)$).

\end{lemma}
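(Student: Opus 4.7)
The plan is to exploit the simple geometric fact that the translation $T_{(\bar{0},\delta)}$ slides $u$ downward in the last coordinate by $\delta$, pushing its support strictly below the graph of $\xi$ by at least $\delta$. This vertical buffer directly yields the containment in $\Omega$, and also provides enough slack to absorb small horizontal perturbations of size $a$.

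For \eqref{eeeq1}, I would unfold the definition of $T_h$ with $h = (\bar{0},\delta)$: a point $x$ where $u_\delta(x) \neq 0$ must satisfy $x \in \Omega$ and $(x',x_N+\delta) \in \Omega$, since otherwise the translation vanishes by definition. The second condition gives $x_N + \delta < \xi(x')$, whence $\{u_\delta \neq 0\} \subseteq \{x_N < \xi(x')-\delta\}$. Taking closures and using continuity of $\xi$, one obtains $\operatorname{Supp} u_\delta \subseteq \{x_N \leq \xi(x')-\delta\}$, which sits inside $\Omega$ because $\delta > 0$ forces $x_N < \xi(x')$.

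For \eqref{eeeq2}, I would fix $R>0$, pick an auxiliary $a_0>0$, and work on the compact ball $K := \overline{B^{N-1}_{R+a_0}} \subset \mathbb{R}^{N-1}$, on which $\xi$ is uniformly continuous. Choose $a \in (0,a_0)$ small enough that $a < \delta/3$ and that $|z'-y'|<a$ for $z',y' \in K$ implies $|\xi(z')-\xi(y')| < \delta/3$. Then for any $y \in B_R$ written as $y = z+w$ with $z \in \operatorname{Supp} u_\delta$ and $|w|<a$, one has $z \in B_{R+a}$, so $z' \in K$ and $z_N \leq \xi(z') - \delta$ by the first step. The chain
$$
y_N \;\leq\; z_N + a \;\leq\; \xi(z') - \delta + a \;\leq\; \xi(y') + \tfrac{\delta}{3} - \delta + \tfrac{\delta}{3} \;=\; \xi(y') - \tfrac{\delta}{3}
$$
then yields $y_N < \xi(y')$, i.e., $y \in \Omega$, which is \eqref{eeeq2}.

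The only mild subtlety — and the main thing to get right — is that $\operatorname{Supp} u_\delta$ need not be bounded, so the uniform continuity of $\xi$ must be invoked only after restricting to a bounded region. This is handled cleanly by fixing the auxiliary radius $a_0$ first and choosing $a<a_0$ afterwards, which keeps $z' \in K$ automatically and makes the dependence $a=a(N,u,\delta,R,\Omega)$ explicit.
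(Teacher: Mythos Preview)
Your argument is correct. The paper does not supply its own proof of this lemma: it is quoted verbatim as \cite[Lemma~14]{fiscella2015density} and simply invoked in the proof of Theorem~\ref{theo1}. Your approach --- pushing the support into the sublevel set $\{x_N \le \xi(x')-\delta\}$ and then using uniform continuity of $\xi$ on the compact ball $\overline{B^{N-1}_{R+a_0}}$ to absorb a small perturbation $B_a$ --- is exactly the standard one and is the argument given in the cited reference. One cosmetic remark: since the paper applies $T_h$ to functions already defined on all of $\mathbb{R}^N$, the clause ``$x\in\Omega$'' in your first step is not needed from the definition of $T_h$; it follows automatically from $x_N<\xi(x')-\delta$.
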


\section{Proofs of main results}\label{sec3}
 This section is aimed at proving  Theorem \ref{theo1} and Theorem \ref{theo2}.

\begin{proof}[Proof of Theorem~{\upshape\ref{theo1}}]
Let $\Omega$ be a hypograph and let $u \in W_{0}^{s,G_{x,y}}(\Omega)$. By possibly changing $u$ in a set
of zero measure, we suppose that
\begin{equation}\label{eeq1}
u=0 \text { in } \mathbb{R}^{N}\setminus \Omega \text {. }
\end{equation}
Let us fix $\sigma>0$. By Lemma \ref{lem2.066} there exists $\bar{\delta}=\bar{\delta}(\sigma)>0$ such that
\begin{equation}\label{eeq12}
\left\|u_{\delta}-u\right\|_{W^{s,G_{x,y}}(\mathbb{R}^{N})}<\frac{\sigma}{3}
\end{equation}
for $\delta$ sufficiently small, say $\delta \leqslant \bar{\delta}$. 
Now, let us fix $\delta=\bar{\delta}$ and let $\tau_j$ be as in Subsection \ref{sec2}. By Lemma \ref{lem2.8} there exists $\bar{\jmath}=\bar{\jmath}(\sigma) \in \mathbb{N}$ such that 
\begin{equation}\label{eeq13}
\left\|\tau_j u_{\bar{\delta}}-u_{\bar{\delta}}\right\|_{W^{s,G_{x,y}}(\mathbb{R}^{N})}<\frac{\sigma}{3}
\end{equation}
for $j$ large enough, say $j \geqslant \bar{\jmath}$.\\
For any $\varepsilon>0$ let us consider
$$
\rho_{\varepsilon}:=\tau_{\bar{\jmath}} u_{\bar{\delta}} * J_{\varepsilon},
$$
where $J_{\varepsilon}$ is the function defined in Subsection \ref{sec2}. Of course, $\rho_{\varepsilon} \in C^{\infty}\left(\mathbb{R}^N\right)$ by construction. Moreover, by the standard properties of the convolution (see e.g. \cite[Proposition~IV.18]{haim1983analyse}) we have that
\begin{equation}\label{eeq14}
\operatorname{Supp} \rho_{\varepsilon} \subseteq \operatorname{Supp} \tau_{\bar{\jmath}} u_{\bar{\delta}}+\overline{B}_{\varepsilon} .
\end{equation}
Also, by Lemma \ref{lem2.8} we have that
\begin{equation}\label{eeq15}
\operatorname{Supp} \tau_{\bar{\jmath}} u_{\bar{\delta}} \subseteq \overline{B}_{\bar{\jmath}+1} \cap \operatorname{Supp} u_{\bar{\delta}} .
\end{equation}
Now we claim that
\begin{equation}\label{eeq16}
\operatorname{Supp} \rho_{\varepsilon} \subseteq B_{\bar{\jmath}+2} \cap\left(\operatorname{Supp} u_{\bar{\delta}}+B_{2 \varepsilon}\right)
\end{equation}
if $\varepsilon$ is sufficiently small (possibly in dependence on $\sigma$ ). Indeed: let $P \in \operatorname{Supp} \rho_{\varepsilon}$. Then, by \eqref{eeq14}, there exists $Q \in \operatorname{Supp} \tau_{\bar{\jmath}} u_{\bar{\delta}}$ such that $|P-Q| \leqslant \varepsilon<2 \varepsilon$. Thus, by \eqref{eeq15}, we have $|Q| \leqslant \bar{\jmath}+1$ and $Q \in \operatorname{Supp} u_{\bar{\delta}}$. In particular, $|P| \leqslant|Q|+|P-Q| \leqslant$ $\bar{\jmath}+1+2 \varepsilon<\bar{\jmath}+2$ for $\varepsilon$ small enough, and this proves \eqref{eeq16}.\\
From \eqref{eeeq2} and \eqref{eeq16}, we deduce that Supp $\rho_{\varepsilon}$ is compact and contained in $\Omega$, as long as $\varepsilon$ is small enough, say $2 \varepsilon<a(N, u, \bar{\delta}, \bar{\jmath}+2, \Omega)$ in the notation of Lemma \ref{lemmad}. As a consequence of this, we have
$$
\rho_{\varepsilon} \in C_0^{\infty}(\Omega),
$$
for $\varepsilon$ small enough.\\
Furthermore, by Lemma \ref{lem2.06} there exists $\bar{\varepsilon}=\bar{\varepsilon}(\sigma)>0$ such that
\begin{equation}\label{eeq17}
\left\|\rho_{\varepsilon}-\tau_{\bar{\jmath}} u_{\bar{\delta}}\right\|_{W^{s,G_{x,y}}(\mathbb{R}^{N})}<\frac{\sigma}{3}
\end{equation}
for $\varepsilon$ small, say $\varepsilon \leqslant \bar{\varepsilon}$.\\
Hence, by $\eqref{eeq12},\eqref{eeq13}$ and \eqref{eeq17}, we have
$$
\begin{aligned}
\left\|u-\rho_{\varepsilon}\right\|_{W^{s,G_{x,y}}(\mathbb{R}^{N})} &\leqslant\left\|u-u_{\bar{\delta}}\right\|_{W^{s,G_{x,y}}(\mathbb{R}^{N})}
\\&+\left\|u_{\bar{\delta}}-\tau_{\bar{\jmath}} u_{\bar{\delta}}\right\|_{W^{s,G_{x,y}}(\mathbb{R}^{N})}+\left\|\tau_{\bar{\jmath}} u_{\bar{\delta}}-\rho_{\varepsilon}\right\|_{W^{s,G_{x,y}}(\mathbb{R}^{N})}\\
&<\frac{\sigma}{3}+\frac{\sigma}{3}+\frac{\sigma}{3}=\sigma .
\end{aligned}
$$
The arbitrariness of $\sigma$ concludes the proof of Theorem \ref{theo1}.
\end{proof}

\begin{proof}[Proof of Theorem~{\upshape\ref{theo2}}]
The proof of Theorem \ref{theo2} is similar to the one of Theorem 6 in \cite{fiscella2015density}, where the authors use an appropriate partition of unity in order to reduce the problem locally to the case of a hypograph and thus use Theorem \ref{theo1}.
\end{proof}
\begin{remark}\label{remark1} The sequence of function $\rho_{\varepsilon}$ in Theorem \ref{theo1} is supported in the vicinity of the support of the original function $u$. More precisely, fixed any $\gamma>0$ there exists $\varepsilon_\gamma>0$ such that for any $\varepsilon \in\left(0, \varepsilon_\gamma\right]$ one has that
$$
\operatorname{Supp} \rho_{\varepsilon} \subseteq \operatorname{Supp} u+B_\gamma
$$
Indeed, by construction we have
$$
\operatorname{Supp} u_{\bar{\delta}} \subseteq \operatorname{Supp} u+B_{2 \bar{\delta}} .
$$
This and \eqref{eeq16} yield that
$$
\operatorname{Supp} \rho_{\varepsilon} \subseteq \operatorname{Supp} u_{\bar{\delta}}+B_{2 \varepsilon} \subseteq \operatorname{Supp} u+B_{2 \bar{\delta}}+B_{2 \varepsilon} \subseteq \operatorname{Supp} u+B_{2(\varepsilon+\bar{\delta})},
$$
thus checking Remark \ref{remark1}.
\end{remark}

\bibliographystyle{plain}

\begin{thebibliography}{99}
\bibitem{Adams1975}ADAMS, R. A. Sobolev Spaces. Academic Press, New York, 1975.
\bibitem{abss2022class}AZROUL, E., BENKIRANE, A., SHIMI, M., AND SRATI, M. On a Class of Nonlocal
Problems in New Fractional Musielak-Sobolev Spaces. Appl. Anal. 101, 6 (2020),
1933–1952.
\bibitem{abss2023emb}AZROUL, E., BENKIRANE, A., SHIMI, M., AND SRATI, M. Embedding and Extension
Results in Fractional Musielak-Sobolev Spaces. Appl. Anal. 102, 1 (2021), 195–219.
\bibitem{baalal2018traces}BAALAL, A., AND BERGHOUT, M. Traces and Fractional Sobolev Extension Domains
with Variable Exponent. Int. J. Math. Anal. (N.S.) 12, 2 (2018), 85–98.
\bibitem{babm2018density}BAALAL, A., AND BERGHOUT, M. Density Properties for Fractional Sobolev Spaces
with Variable Exponents. Ann. Funct. Anal. 10, 3 (2019), 308–324.
\bibitem{baalal2024density} BAALAL, A., EL WAZNA, A., AND ZAOUI, M. A. Density Properties for Orlicz
Sobolev Spaces with Fractional Order. Rend. Circ. Mat. Palermo, II. Ser (2024), 1–16.

\bibitem{bamhoh2024onthefract} BAHROUNI, A., MISSAOUI, H., AND OUNAIES, H. On the Fractional
Musielak-Sobolev Spaces in $\mathbb{R}^{d}$: Embedding Results and Applications. J. Math.
Anal. Appl. 537, 1 (2024), Paper No. 128284, 32.

\bibitem{barv2018onanew}BAHROUNI, A., AND R\u{A}DULESCU , V. D. On a new Fractional Sobolev Space
and Applications to Nonlocal Variational Problems with Variable Exponent. Discrete
Contin. Dyn. Syst. Ser. S 11, 3 (2018), 379–389.

\bibitem{bsoh2020embedding}BAHROUNI, S., AND OUNAIES, H. Embedding Theorems in the Tractional
Orlicz-Sobolev Space and Applications to Non-local Problems. Discrete Contin. Dyn.
Syst. 40, 5 (2020), 2917–2944.

\bibitem{bot2020basic}BAHROUNI, S., OUNAIES, H., AND TAVARES, L. S. Basic Results of Fractional
Orlicz-Sobolev Space and Applications to Non-local Problems. Topol. Methods
Nonlinear Anal. 55, 2 (2020), 681–695.

\bibitem{bor2023anewclass}BOUJEMAA, H., OULGIHT, B., AND RAGUSA, M. A. A New Class of Fractional
Orlicz-Sobolev Space and Singular Elliptic Problems. J. Math. Anal. Appl. 526, 1
(2023), Paper No. 127342, 42.

\bibitem{haim1983analyse}BREZIS, H. Analyse Fonctionnelle : Théorie et Applications. Masson, Paris, 1983.

\bibitem{bms2019radial}BREZIS, H., MIRONESCU, P., AND SHAFRIR, I. Radial Extensions in Fractional
Sobolev Spaces. Rev. R. Acad. Cienc. Exactas Fís. Nat. Ser. A Mat. RACSAM 113, 2(2019), 707–714.

\bibitem{aacs2023onfrac} DE ALBUQUERQUE, J. C., DE ASSIS, L. R. S., CARVALHO, M. L. M., AND SALORT,
A. On Fractional Musielak-Sobolev Spaces and Applications to Nonlocal Problems. J.
Geom. Anal. 33, 4 (2023), Paper No. 130, 37.

\bibitem{dlrj2017traces}DEL PEZZO, L. M., AND ROSSI, J. D. Traces for Fractional Sobolev Spaces with
Variable Exponents. Adv. Oper. Theory 2, 4 (2017), 435–446.

\bibitem{di2012hitchhikers}DI NEZZA, E., PALATUCCI, G., AND VALDINOCI, E. Hitchhiker’s Guide to the
Fractional Sobolev Spaces. Bull. Sci. Math. 136, 5 (2012), 521–573.

\bibitem{BjSa2019}FERNÁNDEZ BONDER, J., AND SALORT, A. M. Fractional Order Orlicz-Sobolev
Spaces. J. Funct. Anal. 277, 2 (2019), 333–367.

\bibitem{fiscella2015density}FISCELLA, A., SERVADEI, R., AND VALDINOCI, E. Density Properties for Fractional Sobolev Spaces. Ann. Acad. Sci. Fenn. Math 40, 1 (2015), 235–253.

\bibitem{kjv2017fract}KAUFMANN, U., ROSSI, J. D., AND VIDAL, R. E. Fractional Sobolev Spaces with
Variable Exponents and Fractional p(x)-Laplacians. Electron. J. Qual. Theory Differ.
Equ., 76 (2017), 1–10.

\bibitem{km2023bourgan}KIM, M. Bourgain, Brezis and Mironescu Theorem for Fractional Sobolev Spaces with
Variable Exponents. Ann. Mat. Pura Appl. (4) 202, 6 (2023), 2653–2664.

 \bibitem{kovavcik1991spaces} KOV{\'A}{\v{C}}IK, O., AND R{\'A}KOSN{\'I}K, J. On Spaces $L^{p(x)}$ and $W^{k,p(x)}$. Czech. Math. J. 41, 4 (1991), 592–618.
  \bibitem{kajofs1977} KUFNER, A., JOHN, O., AND FU{\v{C}}{\'I}K, S. Function Spaces. Noordhoff, Leyden, 1977.
  
  \bibitem{mmrv2008neu}MIH\u{A}ILESCU, M., AND R\u{A}DULESCU, V. Neumann Problems Associated to
Nonhomogeneous Differential Operators in Orlicz-Sobolev Spaces. Ann. Inst. Fourier (Grenoble) 58, 6 (2008), 2087–2111.

\bibitem{musju1983Orlicz}MUSIELAK, J. Orlicz Spaces and Modular Spaces, vol. 1034 of Lecture Notes in
Mathematics. Springer-Verlag, Berlin, 1983.

\bibitem{vigelis2011musielak}VIGELIS, R. F. On Musielak-Orlicz Function Spaces and Applications to Information Geometry. PhD thesis, 2011. Thesis (Ph.D.)–Universidade Federal do Ceará.

\bibitem{ayya2020someapp} YOUSSFI, A., AND AHMIDA, Y. Some Approximation Results in Musielak-Orlicz
Spaces. Czechoslovak Math. J. 70(145), 2 (2020), 453–471.

\end{thebibliography}

% ------------------------------------------------------------------------
\end{document}